\newcommand{\R}{\mathbb{R}}
\newcommand{\Z}{\mathbb{Z}}
\def\imod#1{\allowbreak\mkern10mu\left({\operator@font mod}\,\,#1\right)}
\newtheorem*{theorem*}{Theorem}
\newtheorem*{lemma*}{Lemma}
\newtheorem{theorem}{Theorem}[section]
\newtheorem{proposition}[theorem]{Proposition}
\newtheorem{corollary}[theorem]{Corollary}
\newtheorem{lemma}[theorem]{Lemma}
\newtheorem*{conjecture*}{Conjecture}
\newtheorem{Claim*}{Claim}
\theoremstyle{definition}
\newtheorem{definition}[theorem]{Definition}
\newtheorem*{definition*}{Definition}
\newtheorem*{remark*}{Remark}
\numberwithin{equation}{section}
\begin{document}

\title[The Gaussian formula and Fleck's congruence]{A generalization of the Gaussian formula and a q-analog of Fleck's congruence}
\author[A.~Schultz]{Andrew Schultz}
\address{Department of Mathematics, Wellesley College, 106 Central Street, Wellesley, MA 02481}
\email{andrew.c.schultz@gmail.com}
\author[R.~Walker]{Robert Walker}
\address{Department of Mathematics, University of Michigan, 2074 East Hall, 530 Church Street, Ann Arbor, MI 48109}
\email{robmarsw@umich.edu}

\begin{abstract}
The $q$-binomial coefficients are the polynomial cousins of the traditional binomial coefficients, and a number of identities for binomial coefficients can be translated into this polynomial setting.  For instance, the familiar vanishing of the alternating sum across row $n \in \Z_{>0}$ of Pascal's triangle is captured by the so-called Gaussian Formula, which states that $\sum_{m=0}^{n}(-1)^m \binom{n}{m}_q$ is $0$ if $n$ is odd, and is equal to $\prod_{k \mbox{\tiny{ odd}}} (1-q^k)$ if $n$ is even.  In this paper, we find a $q$-binomial congruence which synthesizes this result and Fleck's congruence for binomial coefficients, which asserts that for $n , p \in \mathbb{Z}^+$, with $p$ a prime, 
$$\sum_{m \equiv j \imod{p}}(-1)^m \binom{n}{m} \equiv 0 \imod{p^{\left\lfloor\frac{n-1}{p-1}\right\rfloor}}.$$
\end{abstract}


\date{\today}

\maketitle 

\parskip=10pt plus 2pt minus 2pt

\section{Introduction}

Binomial coefficients are the fundamental objects of enumeration and combinatorics, and they are the subject of myriad identities.  The arithmetic properties of binomial coefficients have also been the subject of extensive study.  Granville gives an excellent account in \cite{G} of some congruence equations involving binomial coefficients modulo powers of primes. The motivation for this paper was to revisit a binomial coefficient congruence which comes from an identity of Fleck (\cite{F}, cf. \cite[p.~274]{D} and \cite{G}) and interpret it through the lens of $q$-binomial coefficients.   

\begin{theorem*}[Fleck's Congruence]
If $n, p \in \Z_{>0}$, $p$ is prime, and $0 \leq j \leq p-1$, then $$\sum_{m \equiv j\imod{p}}(-1)^m\binom{n}{m} \equiv 0 \imod{p^{ \left\lfloor\frac{n-1}{p-1}\right\rfloor}}.$$
\end{theorem*}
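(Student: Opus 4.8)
The plan is to isolate the restricted alternating sum
$F(n,j):=\sum_{m\equiv j\imod{p}}(-1)^m\binom{n}{m}$
by a roots-of-unity filter and then bound its $p$-adic size by a valuation count in $\Z[\zeta]$, where $\zeta$ is a primitive $p$-th root of unity. Writing $(1-x)^n=\sum_{m}(-1)^m\binom{n}{m}x^m$ and averaging over the $p$-th roots of unity gives
$$F(n,j)=\frac1p\sum_{t=0}^{p-1}\zeta^{-tj}(1-\zeta^{t})^{n};$$
since $n\ge 1$ the $t=0$ term vanishes, so only $t=1,\dots,p-1$ contribute.

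Next I would recall the standard arithmetic of $\Z[\zeta]$: it is the ring of integers of $\mathbb{Q}(\zeta)$, the prime $p$ is totally ramified with $(p)=\mathfrak{p}^{\,p-1}$ for the prime ideal $\mathfrak{p}=(1-\zeta)$, and the associated valuation $v$ satisfies $v(1-\zeta)=1$ and $v(a)=(p-1)\,v_p(a)$ for every nonzero $a\in\Z$. The key local input is the factorization $1-\zeta^{t}=(1-\zeta)(1+\zeta+\cdots+\zeta^{t-1})$: for $1\le t\le p-1$ the second factor is $\equiv t\not\equiv 0\imod{\mathfrak p}$, hence a $\mathfrak p$-adic unit, so $v(1-\zeta^{t})=1$ and $v\bigl(\zeta^{-tj}(1-\zeta^{t})^{n}\bigr)=n$ for each such $t$.

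Putting these together, the ultrametric inequality gives $v\bigl(\sum_{t=1}^{p-1}\zeta^{-tj}(1-\zeta^{t})^{n}\bigr)\ge n$, so $v(F(n,j))\ge n-(p-1)$. But $F(n,j)\in\Z$, so $v(F(n,j))=(p-1)\,v_p(F(n,j))$ is a nonnegative multiple of $p-1$; writing $n-1=q(p-1)+r$ with $0\le r\le p-2$ one has $(q-1)(p-1)<n-(p-1)\le q(p-1)$, so the least multiple of $p-1$ that is $\ge n-(p-1)$ is $q(p-1)$. Hence $v_p(F(n,j))\ge q=\bigl\lfloor\frac{n-1}{p-1}\bigr\rfloor$, which is the assertion.

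The delicate point is exactly this last rounding: the crude estimate $v_p(F(n,j))\ge\frac{n-(p-1)}{p-1}$ loses a full power of $p$ when $p-1\mid n-1$, and it is the integrality of $F(n,j)$ — forcing $v(F(n,j))$ to be a multiple of $p-1$ — that restores the sharp exponent. An elementary route that bypasses $\Z[\zeta]$ altogether uses Pascal's rule, which yields the recursion $F(n,j)=F(n-1,j)-F(n-1,j-1)$ with $j$ read mod $p$; thus the vector $u_n=(F(n,0),\dots,F(n,p-1))$ equals $(I-P)^{n-1}u_1$, where $P$ is the cyclic shift on $\Z^{p}$ and $u_1=(1,-1,0,\dots,0)$. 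Since $(I-P)^{p-1}\equiv I+P+\cdots+P^{p-1}\imod{p}$ and this latter matrix annihilates every integer vector whose coordinates sum to $0$, one checks that $(I-P)^{p-1}$ maps the lattice $\{w\in\Z^{p}:\sum_i w_i=0\}$ into $p$ times itself; iterating this $\bigl\lfloor\frac{n-1}{p-1}\bigr\rfloor$ times, and noting $u_1$ lies in that lattice, gives the divisibility directly.
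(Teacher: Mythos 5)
Your proof is correct, and it is worth noting that the paper itself offers no proof of Fleck's congruence at all: the statement is quoted as a classical result with references to Fleck, Dickson, and Granville, and the paper's own machinery (the $q$-binomial sums $Q^{(l)}_c$ and their cyclotomic divisibility) is explicitly acknowledged to recover only about half of the exponent $\left\lfloor\frac{n-1}{p-1}\right\rfloor$ after specializing $q=1$. So your argument is genuinely supplying something the paper does not. Your first route is the standard one: the roots-of-unity filter gives $F(n,j)=\frac1p\sum_{t=1}^{p-1}\zeta^{-tj}(1-\zeta^t)^n$, each $1-\zeta^t$ is an associate of the uniformizer $1-\zeta$ in the totally ramified prime above $p$, the ultrametric inequality gives $v(F)\ge n-(p-1)$, and the integrality of $F$ forces $v(F)$ up to the next multiple of $p-1$; your bookkeeping with $n-1=q(p-1)+r$ correctly turns this into $v_p(F)\ge\left\lfloor\frac{n-1}{p-1}\right\rfloor$, and you rightly flag that this rounding step is where the sharp exponent comes from. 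Your second, elementary route is also sound: $F(n,j)=F(n-1,j)-F(n-1,j-1)$ follows from Pascal's rule, $(I-P)^{p-1}\equiv I+P+\cdots+P^{p-1}\imod{p}$ since $\binom{p-1}{i}\equiv(-1)^i\imod{p}$, the all-ones circulant kills the sum-zero lattice, and since $I-P$ preserves that lattice and $u_1=(1,-1,0,\dots,0)$ lies in it, the iteration closes up and yields the full exponent. The one point worth making explicit in the matrix argument is that $(I-P)^{p-1}w$, being divisible by $p$ and still of coordinate-sum zero, lands in $p$ times the sum-zero lattice (not merely in $p\Z^p$), which is exactly what licenses the iteration; you assert this and it is true, but a referee would want the half-line of justification. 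Compared with the paper's $q$-analytic approach, your proofs are sharper for the integer statement but do not lift to $q$-binomial coefficients, which is precisely the trade-off the authors discuss in section 5.
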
 

Fleck's congruence has already been the subject of a number of generalizations and analogs (see \cite{Su1,Su2,SD,Wa,We}) and is connected to a variety of mathematical subdisciplines, including algebraic topology \cite{SD}, Iwasawa theory \cite{Wa} and $p$-adic analysis \cite{We}.   When the authors began this project it seemed there was no work which considered $q$-analogs of Fleck's congruence; Pan has since given such a result in \cite{PFleck}.

Recall that $q$-binomial coefficients are polynomials in $\Z[q]$, defined as $$\binom{n}{m}_q = \frac{(1-q^n)(1-q^{n-1})\cdots(1-q^{n-m+1})}{(1-q^m)(1-q^{m-1})\cdots(1-q)},$$ where $m,n \in \Z_{\geq 0}$ and $0 \leq m \leq n$.  There are numerous binomial coefficient identities that have been translated into the language of $q$-binomial coefficients, including a number of recent discoveries (\cite{A,C,P,Sa}). For some classical examples, recall that Lucas' Theorem tells us that if $n,m \in \Z_{\geq 0}$ satisfy $n=pj+a$ and $m = pk+b$, with $0 \leq a,b \leq p-1$ and $p$ a prime, then $$\binom{n}{m} \equiv \binom{j}{k} \binom{a}{b}\imod{p}.$$  The corresponding result in the language of $q$-binomial coefficients is (\cite[p.~131]{Sa})$$\binom{n}{m}_q \equiv \binom{j}{k} \binom{a}{b}_q \imod{\Phi_p (q)}.$$  (See also \cite{HS} for another $q$-generalization of this result.)

The Chu-Vandermonde identity, \begin{equation}\label{eq:chu.vandermonde}\binom{n}{m} = \sum_{j=0}^m \binom{n-t}{m-j}\binom{t}{j},\end{equation} is an essential tool, and it has a $q$-generalization given by $$\binom{n}{m}_q = \sum_{j=0}^m q^{j(n-t-m+j)}\binom{n-t}{m-j}_q \binom{t}{j}_q.$$ A variant of this identity (Lemma \ref{le:critical.identity}) will be of tremendous help as we explore the main theorems of this paper.

As another example, if $n \in \Z_{>0}$, the identity $\sum_{m=0}^n (-1)^m \binom{n}{m} = 0$ has a $q$-analog given by the so-called Gaussian Formula (see, e.g., \cite{AE}): for all $n \in \Z_{>0}$,
$$\sum_{m=0}^n (-1)^m \binom{n}{m}_q = \left\{\begin{array}{ll}\displaystyle \mathop{\prod_{k \mbox{\tiny{ odd}}}}_{1 \le k\leq n} (1-q^k), & \mbox{ if } n \mbox{ is even}\\0, &\mbox{ if } n \mbox{ is odd}.\end{array}\right.$$

Finally, it follows from an identity due to Euler (\cite[pp.~90-91]{LW}),
\begin{equation}\label{eq:Euler Identity}
\sum_{m=0}^n (-1)^m \binom{n}{m} m^l= \left\{\begin{array}{ll}(-1)^n n!, & \mbox{ if } l = n \\0, &\mbox{ if $0 \le l < n$},\end{array}\right.
\end{equation}
that for any $P = P(x) \in \Z[x]$ and $n > \deg(P)$, one has \begin{equation}\label{eq:alt.sum.with.poly}\sum_{m=0}^n(-1)^m P(m)\binom{n}{m} = 0.\end{equation}

The main result of this paper is the following theorem, which can be interpreted as a $q$-synthesis of the Gaussian Formula, Fleck's congruence, and equation (\ref{eq:alt.sum.with.poly}) (though, as we will see, it isn't a full generalization of Fleck's congruence).  We use the notation $\Phi_k (q) \in \Z[q]$ for the $k$-th cyclotomic polynomial and $\zeta_{2c}$ for a fixed primitive $2c$-th root of unity.

\begin{theorem}\label{th:main.theorem}
Suppose that $c,k \in \Z_{> 0}$ with $k$ odd, and that $d, l, z \in \Z_{\geq 0}$. Furthermore, suppose $P \in \Z[\zeta_{2c}][x]$, and that $n$ is an integer satisfying $n \geq (\deg(P)+2(l+d)+1)kc$. Then $$\sum_{m=0}^n \zeta_{2c}^m P(m) (q^z)^m \frac{d^l}{dq^l}\left[\binom{n}{m}_q\right] \equiv 0 \imod{\Phi_{kc}(q)^{d+1}}.$$
\end{theorem}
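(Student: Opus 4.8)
The natural approach is to evaluate at a primitive $kc$-th root of unity $\omega$ and reduce the divisibility claim to vanishing of a high-order derivative. Setting $q = \omega$, the statement $\Phi_{kc}(q)^{d+1} \mid S(q)$ is equivalent to saying $\omega$ is a root of $S(q)$ of multiplicity at least $d+1$, i.e. $S(\omega) = S'(\omega) = \cdots = S^{(d)}(\omega) = 0$. Since $S(q)$ itself already carries $l$ derivatives of the $q$-binomial coefficients, I would work with the combined object $T_{r}(q) = \sum_m \zeta_{2c}^m P(m)(q^z)^m \frac{d^{l+?}}{dq^{l+?}}[\cdots]$ and track, via the Leibniz rule, how differentiating $S$ up to $d$ times produces a $\Z[\zeta_{2c}]$-linear combination of terms of the shape $\sum_m \zeta_{2c}^m Q(m) (q^z)^m \frac{d^{j}}{dq^{j}}\left[\binom{n}{m}_q\right]$ with $j \le l + d$ and $\deg Q \le \deg P + 2(l+d)$ (the extra polynomial degree coming from the powers of $m$ that fall out when differentiating $m^z$-type factors and the $q$-binomial's derivative). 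So the whole theorem reduces to a single clean claim: if $n \ge (\deg Q + j + 1)kc$ (roughly), then $\sum_m \zeta_{2c}^m Q(m) (q^z)^m \frac{d^{j}}{dq^{j}}[\binom{n}{m}_q]$ vanishes at $q = \omega$.

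**The core vanishing claim.** To prove that, I would use the variant of Chu–Vandermonde advertised as Lemma~\ref{le:critical.identity} to split $\binom{n}{m}_q$ according to residues mod $kc$: write $n = Nkc$ (or handle the general case by peeling off a bounded piece) and express $\binom{n}{m}_q$ in terms of $\binom{N}{\lfloor m/kc\rfloor}$ and a $q$-binomial in the residue of $m$, up to an explicit power of $q$ — this is the $q$-Lucas phenomenon, and at $q = \omega$ the factor $\binom{N}{\lfloor m/kc \rfloor}$ survives as an honest integer while the residue-$q$-binomial becomes a concrete constant. Re-indexing $m = akc + b$ with $0 \le b < kc$, the sum over $m$ factors (after the derivatives are distributed by Leibniz) into a sum over $b$ of $\left(\sum_{a} \zeta_{2c}^{akc}\, R_b(a)\, \binom{N}{a}\right)$ times bounded stuff, where $R_b$ is a polynomial in $a$ whose degree is controlled by $\deg Q + j$. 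Now $\zeta_{2c}^{kc} = (\zeta_{2c}^{c})^{k} = (-1)^k = -1$ since $k$ is odd — this is exactly where the hypothesis "$k$ odd" is used — so each inner sum is $\sum_{a=0}^{N} (-1)^a R_b(a)\binom{N}{a}$, which is $0$ by equation~(\ref{eq:alt.sum.with.poly}) as soon as $N > \deg R_b$. Chasing the degree bookkeeping back, $N > \deg R_b$ is guaranteed by the hypothesis $n \ge (\deg(P) + 2(l+d)+1)kc$, and one needs this uniformly for all the $\le d+1$ derivative terms, which is why the "$+1$" and the factor $kc$ appear.

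**The main obstacle.** The genuinely delicate part is the bookkeeping around derivatives: $\frac{d^l}{dq^l}\binom{n}{m}_q$ is not itself a $q$-binomial, so I cannot directly apply a $q$-Lucas factorization to it. The clean way around this is to never differentiate first — instead, expand $\binom{n}{m}_q$ via Lemma~\ref{le:critical.identity} into a $\Z[q]$-combination of products (power of $q$)$\times\binom{N}{a}\times\binom{b'}{b''}_q$, and only then apply $\frac{d^{l+\cdots}}{dq^{l+\cdots}}$ using the ordinary Leibniz rule, observing that differentiating the power-of-$q$ factor pulls down a polynomial in the exponent (hence a polynomial in $a$ and $b$, absorbable into $R_b$), differentiating the finite $q$-binomial in $b$-variables stays bounded since $0 \le b < kc$, and $\binom{N}{a}$ is untouched by $\frac{d}{dq}$. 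Keeping every degree estimate — especially the claim that the total polynomial degree in $a$ never exceeds $\deg(P) + 2(l+d)$ and that $N = \lfloor n/(kc)\rfloor$ strictly exceeds it under the stated hypothesis — is the part that needs care; once that ledger balances, equation~(\ref{eq:alt.sum.with.poly}) finishes each piece and summing over $b$ and over the $d+1$ derivative orders completes the proof. A minor secondary point is handling $n$ not divisible by $kc$: here one writes $n = Nkc + s$ with $0 \le s < kc$ and uses Lemma~\ref{le:critical.identity} with the small block of size $s$, which only shifts the needed inequality $n \ge (\deg(P)+2(l+d)+1)kc$ by a bounded amount already absorbed in the "$+1$".
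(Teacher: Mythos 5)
Your overall strategy---reduce $\Phi_{kc}(q)^{d+1}\mid S(q)$ to the vanishing of $S,S',\dots,S^{(d)}$ at a primitive $kc$-th root of unity $\omega$, then invoke a $q$-Lucas factorization so that $\zeta_{2c}^{kc}=-1$ turns each inner sum into $\sum_a(-1)^aR_b(a)\binom{N}{a}$, killed by equation (\ref{eq:alt.sum.with.poly})---is genuinely different from the paper's, which never evaluates at roots of unity and instead runs a nested induction (on $d$, then $\deg P$, then $z$, then $n$, and finally $l$), peeling off a single block of size $kc$ via the Chu--Vandermonde analog (Lemma \ref{le:critical.identity}) and gaining one cyclotomic factor per step on $d$ from $\Phi_{kc}(q)\mid\binom{kc}{j}_q$ and $\Phi_{kc}(q)\mid q^{n-kc}-1$. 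Your reduction is legitimate (in characteristic zero $\Phi_{kc}$ is separable, so order-$(d+1)$ divisibility is equivalent to vanishing of the first $d$ derivatives at every primitive $kc$-th root of unity), and in the first-order case $d=l=0$ your argument is clean and correct: there $q$-Lucas really does give $\binom{n}{m}_q\equiv\binom{N}{a}\binom{s}{b}_q\pmod{\Phi_{kc}(q)}$ and the sum factors as claimed.

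However, for $d+l>0$ there is a genuine gap, and it sits exactly where you say the ``ledger'' needs to balance. The $q$-Lucas phenomenon is only a congruence modulo $\Phi_{kc}(q)$, i.e.\ a statement about the value at $q=\omega$; to control $S^{(r)}(\omega)$ for $r\le d$ with $l$ derivatives already inside, you need the full Taylor jet of $\binom{n}{m}_q$ at $\omega$ to order $l+d$. Your proposed fix---expand $\binom{n}{m}_q$ by iterating Chu--Vandermonde into $N$ blocks of size $kc$ and only then differentiate---produces a sum over tuples $(j_1,\dots,j_N,b)$ with $\sum j_i+b=m$ in which two things are unaddressed. First, the tuples with some $j_i$ strictly between $0$ and $kc$ contribute a factor $\Phi_{kc}(q)^t$ with $t\le l+d$ only, so they survive differentiation and must be cancelled at each intermediate order; they do not disappear and their count in $a$ is a multinomial, not $\binom{N}{a}$. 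Second, the exponent of $q$ attached to each tuple in the iterated $q$-Vandermonde depends on the \emph{positions} of the blocks with $j_i=kc$, not merely on their number $a$; after differentiating, or after expanding $q^{kcX}=(1+(q^{kc}-1))^X$ near $\omega$, you pick up position-dependent weights, and reducing the inner sum to a single polynomial $R_b(a)$ requires a symmetrization over $a$-subsets of $[N]$ that you do not carry out. Both effects inflate $\deg R_b$, and it is not verified---and not obvious---that the resulting requirement $N>\deg R_b$ (uniformly over the surviving intermediate-order terms) follows from $n\ge(\deg(P)+2(l+d)+1)kc$; a rough tally suggests the positional quadratic exponents alone can push the needed bound past what the hypothesis supplies. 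Until that bookkeeping is actually done, the higher-multiplicity and higher-derivative cases of the theorem are asserted rather than proved.
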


To recover equation (\ref{eq:alt.sum.with.poly}) from the previous equation, simply set $c=k=P=q=1$ and $d=z=l=0$. As stated, Theorem \ref{th:main.theorem} can't be called a generalization of the Gaussian Formula because it only gives a divisibility result, not an explicit formula.  We will see in the remark following Lemma \ref{le:recursion}, however, that the Gaussian Formula is a consequence of the proof of Theorem \ref{th:main.theorem}; it is in this sense that we say that Theorem \ref{th:main.theorem} is a generalization of the Gaussian Formula.

We will also interpret Theorem \ref{th:main.theorem} as a $q$-analog of Fleck's congruence and consider how close it comes to acting as a generalization in section \ref{sec:fleck}.  For the benefit of the curious reader, we state the $q$-analog of Fleck's congruence below.  We've chosen to restrict to the case where $P(x)$ has integer coefficients since the rest of the sum is integral (though the result holds for $P(x) \in \Z[\zeta_{2c}][x]$ as well).

\begin{theorem}\label{th:fleck.at.k}
Suppose that $c, k \in \Z_{>0}$ with $k$ odd, and that $d, l, z \in \Z_{\geq 0}$. Furthermore, suppose $P(x) \in \Z[x]$, and let $0 \leq j <c$ be given. If $n \geq (\deg(P)+2(l+d)+1)kc$, then 
$$\sum_{m \equiv j \imod{c}}(-1)^{\frac{m-j}{c}} P(m)(q^z)^m \frac{d^l}{dq^l}\left[\binom{n}{m}_q\right] \equiv 0 \imod{\Phi_{kc}(q)^{d+1}}.$$
\end{theorem}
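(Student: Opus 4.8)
The plan is to deduce Theorem~\ref{th:fleck.at.k} from Theorem~\ref{th:main.theorem} by means of a roots-of-unity filter. Write $g(m) = P(m)(q^z)^m \frac{d^l}{dq^l}\!\left[\binom{n}{m}_q\right] \in \Z[q]$, and for $0 \le j < c$ set $S_j = \sum_{m \equiv j \imod{c}} (-1)^{(m-j)/c} g(m) \in \Z[q]$, the quantity we must show is divisible by $\Phi_{kc}(q)^{d+1}$. The first step is the elementary identity that for every odd $s$ with $0 \le s < 2c$,
$$\sum_{m=0}^n \zeta_{2c}^{sm}\, g(m) \;=\; \sum_{j=0}^{c-1} \left(\zeta_{2c}^{\,s}\right)^{j} S_j,$$
which follows by splitting the left-hand sum into residue classes modulo $2c$ and pairing the class of $j$ with that of $j+c$, using $\zeta_{2c}^{sc} = (-1)^s = -1$ to rewrite the contribution of the pair $\{j,j+c\}$ as $\zeta_{2c}^{sj}S_j$.

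The key step is to apply Theorem~\ref{th:main.theorem} not to $\zeta_{2c}$ but to each power $\zeta_{2c}^{s}$ with $s$ odd. Since $s$ is odd, $\zeta_{2c}^{s}$ is a \emph{primitive} $2e_s$-th root of unity with $e_s := c/\gcd(s,c)$, and $\Z[x] \subseteq \Z[\zeta_{2e_s}][x]$; thus Theorem~\ref{th:main.theorem} applies with $c$ replaced by $e_s$ and $k$ replaced by $k_s := k\gcd(s,c)$. One checks that $k_s$ is odd (because $\gcd(s,c)$ divides the odd number $s$, and $k$ is odd), that $k_s e_s = kc$ so the modulus $\Phi_{k_s e_s}(q)^{d+1}$ is again $\Phi_{kc}(q)^{d+1}$, and that the required bound $n \ge \big(\deg(P)+2(l+d)+1\big)k_s e_s = \big(\deg(P)+2(l+d)+1\big)kc$ is precisely the hypothesis we are given. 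Hence $\sum_{m=0}^n \zeta_{2c}^{sm} g(m) \equiv 0 \imod{\Phi_{kc}(q)^{d+1}}$ for every odd $s$, that is, $\sum_{j=0}^{c-1}(\zeta_{2c}^{s})^{j} S_j \equiv 0 \imod{\Phi_{kc}(q)^{d+1}}$.

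To finish I would invert this linear system. The $c$ numbers $\zeta_{2c}^{s}$ with $s$ odd, $0\le s<2c$, are pairwise distinct---indeed they are exactly the roots of $x^c+1$---so the $c\times c$ matrix $\big(\zeta_{2c}^{\,sj}\big)$ is a Vandermonde matrix, invertible over $\mathbb{Q}(\zeta_{2c})$ with an inverse whose entries are \emph{scalars} (not involving $q$) in $\mathbb{Q}(\zeta_{2c})$. Taking the appropriate $\mathbb{Q}(\zeta_{2c})$-linear combination of the $c$ congruences above isolates each $S_j$ and shows $\Phi_{kc}(q)^{d+1} \mid S_j$ in $\mathbb{Q}(\zeta_{2c})[q]$. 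Because $\Phi_{kc}(q)$ is monic with integer coefficients and $S_j \in \Z[q]$, uniqueness of quotient and remainder in polynomial division then forces $\Phi_{kc}(q)^{d+1}\mid S_j$ already in $\Z[q]$, which is the assertion of Theorem~\ref{th:fleck.at.k}; running the same argument over $\Z[\zeta_{2c}][q]$ yields the version with $P(x)\in\Z[\zeta_{2c}][x]$.

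None of this is deep once Theorem~\ref{th:main.theorem} is in hand, so I do not expect a serious obstacle; the one point that needs genuine care is the reindexing $(c,k)\mapsto(e_s,k_s)$ in the middle step---verifying simultaneously that the second parameter stays odd, that the product $kc$ governing the cyclotomic modulus is unchanged, and that the degree bound on $n$ is preserved exactly---together with the routine but necessary descent from divisibility over $\mathbb{Q}(\zeta_{2c})[q]$ to divisibility over $\Z[q]$.
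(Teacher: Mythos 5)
Your proposal is correct and follows essentially the same route as the paper: a roots-of-unity filter over the odd powers $\zeta_{2c}^{s}$, applying Theorem~\ref{th:main.theorem} to each such power viewed as a primitive $2e_s$-th root of unity with the reindexing $(c,k)\mapsto(e_s,k_s)$ so that $k_s e_s = kc$ and $k_s$ stays odd, and then inverting the resulting linear system (the paper does this explicitly via $\sum_h \zeta_c^{-jh}G_h = c\,\zeta_{2c}^{j}S_j$ together with $(c,\Phi_{kc}(q))=1$, which is your Vandermonde inversion in closed form). The careful points you flag are exactly the ones the paper's proof addresses.
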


\begin{remark*}
If $m \equiv j \imod{c}$, then $\frac{m-j}{c} \in \Z$. Therefore, since $\zeta_{2c}^c = -1$, 
\[
\begin{split}
\sum_{m=0}^n \zeta_{2c}^m P(m) (q^z)^m \frac{d^l}{dq^l}\left[\binom{n}{m}_q\right] & = \sum_{0 \le j < c} \sum_{m \equiv j \imod{c}} \zeta_{2c}^{m} P(m) (q^z)^m \frac{d^l}{dq^l}\left[\binom{n}{m}_q\right]\\
& = \sum_{0 \le j < c} \sum_{m \equiv j \imod{c}} \zeta_{2c}^{j+ c \left(\frac{m-j}{c}\right)} P(m) (q^z)^m \frac{d^l}{dq^l}\left[\binom{n}{m}_q\right]\\
& = \sum_{0 \le j < c} \zeta_{2c}^j \cdot \sum_{m \equiv j \imod{c}} (-1)^{\frac{m-j}{c}} P(m) (q^z)^m \frac{d^l}{dq^l}\left[\binom{n}{m}_q\right].\\
\end{split}
\]
Therefore, Theorem \ref{th:fleck.at.k} implies Theorem \ref{th:main.theorem}.  We will see that Theorem \ref{th:main.theorem} implies Theorem \ref{th:fleck.at.k} in section \ref{sec:fleck}, and hence these theorems are actually equivalent.\end{remark*}

To avoid unnecessary repetition, in the rest of the paper we use $c$ to denote a positive integer and $k$ an odd natural number.  In the same way, variables $l, d, z$, and $n$ are reserved for natural numbers, with further restrictions stated as necessary.

This paper is organized as follows.  In the next section, we introduce the family $Q^{(l)}_{c}(P(x),z,n)$ of ``alternating sums'' of $q$-binomial coefficients or their derivatives, and we give some basic relations satisfied by these polynomials.  We then devote sections \ref{sec:no.derivative} and \ref{sec:yes.derivative} to proving Theorem \ref{th:main.theorem}, first in the case $l=0$, and then for higher derivatives. In section \ref{sec:fleck}, we explore the connection between Theorem \ref{th:main.theorem} and Fleck's congruence.  Section \ref{sec:wrapup} is spent in discussing some avenues of future exploration.

\section{Some preliminary results}\label{sec:strategy}

As we examine various congruences involving $q$-binomial coefficients and cyclotomic polynomials, we will make frequent use of a few of the basic properties of these polynomials.  We collect these properties in the following lemma, stated without proof (the results stated under this lemma are both well-known and easy to verify).  Moreover, we will not reference this lemma when we use it in proofs later in this paper; the reader will undoubtedly understand the appropriate reference in these cases.

\begin{lemma}\label{le:uncited.lemma}
For $n,m \in \Z_{\geq 0}$, one has
\begin{enumerate}\addtolength{\itemsep}{10pt}
\item $\displaystyle \binom{n+1}{m}_q = q^m\binom{n}{m}_q + \binom{n}{m-1}_q$; 
\item $\displaystyle (1-q^m)\binom{n}{m}_q = (1-q^n)\binom{n-1}{m-1}_q$; and
\item $\Phi_{n}(q)\mid 1-q^m$ if and only if $n \mid m$.
\end{enumerate}
\end{lemma}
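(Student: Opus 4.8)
The plan is to verify each of the three items directly, treating the first two as formal algebraic identities in $\Z[q]$ (or the fraction field $\mathbb{Q}(q)$) and the third through the factorization of $1-q^m$ into cyclotomic polynomials. Throughout I write $(q)_n := \prod_{i=1}^{n}(1-q^i)$ for $n \geq 0$ (with $(q)_0 = 1$), so that $\binom{n}{m}_q = (q)_n / \big((q)_m\,(q)_{n-m}\big)$ and $(q)_{r} = (q)_{r-1}(1-q^r)$.

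For item (1), I would clear denominators on the right-hand side. Using $(q)_m = (q)_{m-1}(1-q^m)$ and $(q)_{n-m+1} = (q)_{n-m}(1-q^{n-m+1})$, both terms $q^m\binom{n}{m}_q$ and $\binom{n}{m-1}_q$ can be placed over the common denominator $(q)_m(q)_{n-m+1}$, and the numerator becomes $(q)_n\big[q^m(1-q^{n-m+1}) + (1-q^m)\big]$. The bracketed expression collapses to $1-q^{n+1}$ after the $\pm q^m$ terms cancel, and since $(q)_n(1-q^{n+1}) = (q)_{n+1}$ the quotient is exactly $\binom{n+1}{m}_q$. This is a one-line computation once the common denominator is fixed. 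Item (2) is even more direct: rewriting the left side via $(q)_m = (q)_{m-1}(1-q^m)$ cancels the factor $(1-q^m)$ to give $(q)_n / \big((q)_{m-1}(q)_{n-m}\big)$, while rewriting the right side via $(q)_n = (q)_{n-1}(1-q^n)$ yields the identical expression; both sides reduce to the same quotient, and the identity holds as an equality in $\Z[q]$ since each side is genuinely a polynomial.

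For item (3), the key input is the standard factorization $1 - q^m = -\prod_{d \mid m}\Phi_d(q)$, together with the facts that each $\Phi_d(q)$ is irreducible over $\mathbb{Q}$ and that distinct cyclotomic polynomials are coprime (a consequence of each root of unity being primitive of exactly one order). The ``if'' direction is then immediate, since $\Phi_n(q)$ appears literally as a factor of $1-q^m$ whenever $n \mid m$. For ``only if'', I would argue by unique factorization in $\Z[q]$: if $\Phi_n(q) \mid 1-q^m$, then the irreducible polynomial $\Phi_n(q)$ must divide, hence equal (both being monic), one of the factors $\Phi_d(q)$ with $d \mid m$, which forces $n = d$ and therefore $n \mid m$. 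Equivalently, one could compare root sets over $\C$: a primitive $n$-th root of unity $\zeta$ satisfies $\zeta^m = 1$ precisely when $n \mid m$, and since $\Phi_n$ is separable, $\Phi_n(q) \mid 1-q^m$ iff all such $\zeta$ are $m$-th roots of unity.

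I expect no serious obstacle, as all three statements are routine; the only point requiring genuine input beyond algebra is the appeal in item (3) to the irreducibility and pairwise coprimality of cyclotomic polynomials, which I would cite as standard rather than reprove. The one bookkeeping caution is to handle the boundary cases (e.g.\ $m=0$ in items (1) and (2)) via the usual conventions $\binom{n}{-1}_q = 0$ and $(q)_0 = 1$, which are compatible with the computations above.
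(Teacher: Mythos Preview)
Your proposal is correct: each of the three items is verified by the straightforward algebraic manipulations you describe, and the boundary cases are handled by the standard conventions you note. The paper itself does not give a proof of this lemma at all---it explicitly states the results as ``both well-known and easy to verify'' and omits any argument---so your direct verification is exactly the kind of routine check the authors had in mind.
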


The following result will also prove quite handy.  It is the $q$-generalization of the familiar result concerning the divisibility of $\binom{p^\alpha}{m}$ by $p$ when $p$ is prime, $0<m<p^\alpha$ and $\alpha \in \Z_{>0}$. On the one hand, as with a number of $q$-generalizations of binomial coefficient results, the primality requirement isn't necessary for the next lemma.  On the other hand, without this condition the identity doesn't specialize to an interesting identity for binomial coefficients.

\begin{lemma}\label{le:free.divisibility}
If $0<m<n$, then $\Phi_n(q) \mid \binom{n}{m}_q$.
\end{lemma}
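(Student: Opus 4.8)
The plan is to combine the recursion in part (2) of Lemma~\ref{le:uncited.lemma} with the irreducibility of cyclotomic polynomials. The one external ingredient I would invoke is the standard fact that $\Phi_n(q)$ is monic and irreducible over $\Q$; since a monic polynomial is primitive, Gauss's lemma upgrades this to irreducibility — and hence primality — of $\Phi_n(q)$ in the unique factorization domain $\Z[q]$. Everything else will be a short manipulation, using that $\binom{n}{m}_q \in \Z[q]$ as recorded in the definition.

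Concretely, I would start from the identity
\[ (1-q^m)\binom{n}{m}_q = (1-q^n)\binom{n-1}{m-1}_q, \]
which is part (2) of Lemma~\ref{le:uncited.lemma}. Because $n \mid n$, part (3) of that lemma gives $\Phi_n(q) \mid (1-q^n)$, and therefore $\Phi_n(q)$ divides the product $(1-q^m)\binom{n}{m}_q$. On the other hand, the hypothesis $0<m<n$ forces $n \nmid m$, so part (3) of the lemma shows $\Phi_n(q) \nmid (1-q^m)$. Since $\Phi_n(q)$ is prime in $\Z[q]$ and does not divide the factor $(1-q^m)$, it must divide the other factor, namely $\binom{n}{m}_q$, which is exactly the claim.

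As a backup (and as a possibly more self-contained route) I would keep in mind the roots-of-unity argument: for any primitive $n$-th root of unity $\zeta$, evaluating $\left(\prod_{j=1}^{m}(1-q^j)\right)\binom{n}{m}_q = \prod_{j=n-m+1}^{n}(1-q^j)$ at $q=\zeta$ annihilates the right-hand side through its $j=n$ factor, while the left-hand product $\prod_{j=1}^{m}(1-\zeta^j)$ is nonzero because $1\le j\le m<n$; hence $\binom{n}{m}_q$ vanishes at every root of $\Phi_n$, and since $\Phi_n$ is squarefree, monic, and $\binom{n}{m}_q\in\Z[q]$, we conclude $\Phi_n(q)\mid\binom{n}{m}_q$. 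The only point requiring any care in either approach is the appeal to the primality (respectively squarefreeness and irreducibility) of $\Phi_n(q)$; beyond that I expect the argument to be only a few lines.
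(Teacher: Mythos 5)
Your argument is correct and is in essence the paper's own: the paper observes directly from the defining ratio that $\Phi_n(q)$ divides the numerator (via the factor $1-q^n$) but not the denominator (whose factors are $1-q^j$ with $0<j\le m<n$), which is exactly the combination of part (3) of Lemma~\ref{le:uncited.lemma} and the irreducibility of $\Phi_n$ that you make explicit. Your version, routed through the identity $(1-q^m)\binom{n}{m}_q=(1-q^n)\binom{n-1}{m-1}_q$ and the primality of $\Phi_n(q)$ in $\Z[q]$, is if anything slightly more careful than the paper about the implicit appeal to irreducibility, and your roots-of-unity backup is an equally valid alternative.
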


\begin{proof}
In the expression for $\binom{n}{m}_q$ there is a factor of $1-q^n$ in the numerator, and hence $\Phi_n(q)$ divides the numerator.  But in the denominator the terms $1-q^j$ do not include any $j$ for which $n \mid j$.  Hence $\Phi_n(q)$ does not divide the denominator.
\end{proof}

Since we've already been introduced to the basic sums of interest, we will give them a name for convenience.

\begin{definition}
For $n,z,c,l \in \Z_{\geq 0}$ and $P(x) \in \Z[\zeta_{2c}][x]$, let
$$Q^{(l)}_c(P(x),z,n) = \sum_{m=0}^n \zeta_{2c}^mP(m) (q^z)^m\frac{d^l}{dq^l}\left[\binom{n}{m}_q\right].$$  Note that $Q^{(l)}_c(P(x),z,n) \in \Z[\zeta_{2c}][q]$.
\end{definition}

We conclude with two results that allow us to express relationships between various polynomials in this family. First,

\begin{lemma}\label{le:n.in.terms.of.n.minus.one.and.two}
For $n \geq 2$, we have
\begin{equation*}\begin{split}Q^{(0)}_c(P(x),0,n) &= Q^{(0)}_c\left(P(x),0,n-1\right)+\zeta_{2c}Q^{(0)}_c\left(P(x+1),0,n-1\right))\\&\quad-\zeta_{2c}(1-q^{n-1})Q^{(0)}_c\left(P(x+1),0,n-2\right).\end{split}\end{equation*}
\end{lemma}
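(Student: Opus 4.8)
The plan is to expand $\binom{n}{m}_q$ using the Pascal-type recursion from Lemma \ref{le:uncited.lemma}(1), applied twice, and then repackage the resulting sums as the $Q$-polynomials appearing on the right-hand side. Starting from $\binom{n}{m}_q = q^m\binom{n-1}{m}_q + \binom{n-1}{m-1}_q$, I would substitute into the definition of $Q^{(0)}_c(P(x),0,n)$ and split the sum into two pieces. The first piece is $\sum_m \zeta_{2c}^m P(m) q^m \binom{n-1}{m}_q$; the second is $\sum_m \zeta_{2c}^m P(m) \binom{n-1}{m-1}_q$. In the second piece I would reindex $m \mapsto m+1$, which produces a factor $\zeta_{2c}^{m+1} P(m+1) \binom{n-1}{m}_q = \zeta_{2c}\cdot \zeta_{2c}^m P(m+1)\binom{n-1}{m}_q$; this is almost $\zeta_{2c} Q^{(0)}_c(P(x+1),0,n-1)$ except for the stray $q^m$ factor sitting in the first piece rather than here.

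To reconcile the $q^m$ discrepancy, I would apply the recursion a second time to the $\binom{n-1}{m}_q$ inside the first piece: $q^m\binom{n-1}{m}_q = \binom{n}{m}_q - \binom{n-1}{m-1}_q$ is circular, so instead I'd use it in the form $q^m \binom{n-1}{m}_q = \binom{n-1}{m}_q - \binom{n-2}{m-1}_q$ — wait, that's not quite an identity either. The cleaner route: note that part (1) gives $\binom{n-1}{m}_q = q^m\binom{n-2}{m}_q + \binom{n-2}{m-1}_q$, so $q^m\binom{n-1}{m}_q = q^{2m}\binom{n-2}{m}_q + q^m\binom{n-2}{m-1}_q$, which reintroduces a $q^{2m}$ term. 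The actual mechanism must instead exploit Lemma \ref{le:uncited.lemma}(2): from $(1-q^m)\binom{n}{m}_q = (1-q^n)\binom{n-1}{m-1}_q$ we get $q^m\binom{n-1}{m}_q = \binom{n-1}{m}_q - (1-q^{n-1})\binom{n-2}{m-1}_q$ after a shift of indices. Substituting this into the first piece converts it into $Q^{(0)}_c(P(x),0,n-1)$ minus $(1-q^{n-1})\sum_m \zeta_{2c}^m P(m)\binom{n-2}{m-1}_q$, and reindexing the latter sum ($m\mapsto m+1$) yields exactly $-\zeta_{2c}(1-q^{n-1})Q^{(0)}_c(P(x+1),0,n-2)$.

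Assembling the three contributions — $Q^{(0)}_c(P(x),0,n-1)$ from the first piece, $\zeta_{2c}Q^{(0)}_c(P(x+1),0,n-1)$ from the reindexed second piece, and $-\zeta_{2c}(1-q^{n-1})Q^{(0)}_c(P(x+1),0,n-2)$ from the correction term — gives the claimed identity. Throughout I should be careful about the endpoints of the summation: reindexing $m\mapsto m+1$ formally shifts the range, but the $q$-binomial coefficients vanish outside $0 \le m \le n$, so the boundary terms contribute nothing and the ranges can all be taken as $0$ to $n$ (or to the relevant $n-1$, $n-2$). I expect the main obstacle to be precisely this bookkeeping: getting the index shifts, the vanishing boundary terms, and the algebraic identity $q^m\binom{n-1}{m}_q = \binom{n-1}{m}_q - (1-q^{n-1})\binom{n-2}{m-1}_q$ (a consequence of combining parts (1) and (2) of Lemma \ref{le:uncited.lemma}) all lined up correctly, rather than any conceptual difficulty.
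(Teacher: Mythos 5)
Your proposal is correct and follows essentially the same route as the paper: apply the Pascal-type recursion $\binom{n}{m}_q = q^m\binom{n-1}{m}_q + \binom{n-1}{m-1}_q$, rewrite $q^m = 1-(1-q^m)$ so that part (2) of Lemma \ref{le:uncited.lemma} converts $(1-q^m)\binom{n-1}{m}_q$ into $(1-q^{n-1})\binom{n-2}{m-1}_q$, and then shift indices, with the boundary terms vanishing as you note. The identity $q^m\binom{n-1}{m}_q = \binom{n-1}{m}_q - (1-q^{n-1})\binom{n-2}{m-1}_q$ you settle on is exactly the mechanism the paper uses.
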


\begin{proof}
This boils down to a calculation that employs a few standard $q$-binomial identities.
\begin{equation*}\begin{split}
Q^{(0)}_c(P(x),0,n) 
&=\sum_{m=0}^n \zeta_{2c}^m P(m)\binom{n}{m}_q \\
&=\sum_{m=0}^n \zeta_{2c}^m P(m)\left(q^m\binom{n-1}{m}_q+\binom{n-1}{m-1}\right) \\
&=\sum_{m=0}^n \zeta_{2c}^m P(m)\binom{n-1}{m}_q-\sum_{m=0}^n \zeta_{2c}^m P(m)(1-q^m)\binom{n-1}{m}_q\\ &\quad+\sum_{m=0}^n \zeta_{2c}^mP(m)\binom{n-1}{m-1}_q \\
&=\sum_{m=0}^n \zeta_{2c}^m P(m)\binom{n-1}{m}_q-(1-q^{n-1})\sum_{m=0}^n \zeta_{2c}^m P(m)\binom{n-2}{m-1}_q\\ &\quad+\sum_{m=0}^n \zeta_{2c}^mP(m)\binom{n-1}{m-1}_q. \\
\end{split}\end{equation*}
The result now follows from shifting indices in the last two sums.
\end{proof}

\begin{remark*}
The previous lemma captures a standard argument for proving the Gaussian Formula.  To see this, one simply needs to observe that when $c=P=1$, the result gives
$$\sum_{m=0}^n (-1)^m \binom{n}{m}_q = (1-q^{n-1}) \sum_{m=0}^{n-2}(-1)^m \binom{n-2}{m}_q.$$  By computing the initial conditions $Q^{(0)}_1(1,0,1) = 0$ and $Q^{(0)}_1(1,0,2) = 1-q$, the result follows by an easy induction.
\end{remark*}

Second, we prove a critical identity that we use in our analysis of Theorem \ref{th:main.theorem}.  This is simply another $q$-analog of Equation (\ref{eq:chu.vandermonde}).

\begin{lemma}[Chu-Vandermonde Analog]\label{le:critical.identity}
$$\sum_{m=0}^n \zeta_{2c}^m P(m) \binom{n}{m}_q = \sum_{m=0}^{n-t} \sum_{j=0}^t \zeta_{2c}^m \zeta_{2c}^{t-j}P(m+t-j)q^{jm}\binom{t}{j}_q \binom{n-t}{m}_q.$$
\end{lemma}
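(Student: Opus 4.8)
The plan is to start from the $q$-Chu-Vandermonde identity that is quoted earlier in the introduction, namely $\binom{n}{m}_q = \sum_{j=0}^m q^{j(n-t-m+j)}\binom{n-t}{m-j}_q\binom{t}{j}_q$, but it is cleaner to use the symmetric splitting $\binom{n}{m}_q = \sum_{i+j=m} q^{ij}\binom{n-t}{i}_q\binom{t}{j}_q$ (valid for $n \ge t$, with the convention that $q$-binomials with out-of-range bottom entries vanish). Substituting this into the left-hand side of the claimed identity gives
\[
\sum_{m=0}^n \zeta_{2c}^m P(m)\binom{n}{m}_q = \sum_{m=0}^n \sum_{i+j=m} \zeta_{2c}^m P(m)\, q^{ij}\binom{n-t}{i}_q\binom{t}{j}_q,
\]
and then I would reindex the double sum by the independent variables $i$ and $j$, replacing $m$ everywhere by $i+j$. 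Since $\binom{n-t}{i}_q$ vanishes unless $0 \le i \le n-t$ and $\binom{t}{j}_q$ vanishes unless $0 \le j \le t$, the ranges become $0 \le i \le n-t$ and $0 \le j \le t$, producing
\[
\sum_{i=0}^{n-t}\sum_{j=0}^t \zeta_{2c}^{i+j} P(i+j)\, q^{ij}\binom{n-t}{i}_q\binom{t}{j}_q.
\]

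The remaining step is purely cosmetic bookkeeping to match the exact form in the statement: I substitute $j \mapsto t-j$ in the inner sum (so $j$ runs from $t$ down to $0$, i.e.\ again over $0 \le j \le t$), using the $q$-binomial symmetry $\binom{t}{t-j}_q = \binom{t}{j}_q$. Under this substitution $i+j$ becomes $i+t-j$, the exponent $ij$ becomes $i(t-j)$, and renaming the outer index $i$ to $m$ yields exactly
\[
\sum_{m=0}^{n-t}\sum_{j=0}^t \zeta_{2c}^m\zeta_{2c}^{t-j} P(m+t-j)\, q^{jm}\binom{t}{j}_q\binom{n-t}{m}_q,
\]
after noting $q^{m(t-j)} = q^{mt - mj}$ — wait, that does not match $q^{jm}$. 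The fix is to substitute $i \mapsto m$ but $j \mapsto t-j$ so that the exponent $ij = m(t-j)$; to get $q^{jm}$ instead one should rather substitute so that the surviving exponent is $jm$. Concretely, after reindexing to independent $i,j$, I instead rename $i \to m$ and keep $j$, giving exponent $mj = jm$ directly with term $\zeta_{2c}^{m+j}P(m+j)\binom{n-t}{m}_q\binom{t}{j}_q$; then applying $j \mapsto t-j$ only inside the $\zeta$ and $P$ factors is not legitimate. So the correct route: after reindexing, substitute $j \mapsto t-j$ everywhere, turning the term into $\zeta_{2c}^{m+t-j}P(m+t-j) q^{m(t-j)}\binom{n-t}{m}_q\binom{t}{j}_q$, and then observe the statement's exponent $q^{jm}$ must be a typo or uses a mirrored $q$-Chu-Vandermonde; in fact starting instead from the asymmetric form $\binom{n}{m}_q = \sum_j q^{j(n-t-m+j)}\binom{n-t}{m-j}_q\binom{t}{j}_q$ and setting the summation variable appropriately produces precisely the exponent $q^{jm}$ after the shift. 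I would therefore carry out the algebra starting from whichever quoted $q$-Chu-Vandermonde variant lands on $q^{jm}$ cleanly, most likely by writing $\binom{n}{m}_q = \sum_{j} q^{j(m-j)}\binom{n-t}{?}_q\binom{t}{j}_q$ — I will pin down the exact exponent-bearing variant during write-up.

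The main obstacle is thus not conceptual but purely a matter of choosing the correct one of the several equivalent $q$-Chu-Vandermonde forms so that, after the reindexing $m \mapsto m + t - j$ (which is what converts the original summation index $m$ on the left into the new outer index and shifts the argument of $P$), the power of $q$ comes out as exactly $q^{jm}$ and the $\zeta_{2c}$ bookkeeping gives $\zeta_{2c}^m\zeta_{2c}^{t-j}$. I expect to verify the identity first at the level of formal power series / polynomial coefficients (both sides are elements of $\Z[\zeta_{2c}][q]$ and the identity is an identity of polynomials, so it suffices to check it as such), and to record the convention that $\binom{a}{b}_q = 0$ when $b < 0$ or $b > a$, which is what makes the range manipulations clean. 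Everything else — expanding, swapping the finite sums, relabeling — is routine, so the write-up will be short.
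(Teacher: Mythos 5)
Your overall strategy --- substitute a known $q$-Chu--Vandermonde expansion of $\binom{n}{m}_q$ into the left-hand side and reindex --- can be made to work, but as written the proof has a genuine gap in exactly the place where the content of the lemma lives. First, the ``symmetric splitting'' you start from, $\binom{n}{m}_q = \sum_{i+j=m} q^{ij}\binom{n-t}{i}_q\binom{t}{j}_q$, is false: for $n=2$, $t=1$, $m=1$ it gives $2$ rather than $1+q$. (The $q$-Vandermonde identity has no such naively symmetric form; the exponent must record which block each index comes from, e.g.\ $q^{i(t-j)}$ in that parametrization.) Second, and more importantly, you never complete the exponent bookkeeping: you observe that your substitution produces $q^{m(t-j)}$ instead of $q^{jm}$, attempt two repairs, reject one as ``not legitimate,'' suggest the statement may contain a typo, and close by saying you will ``pin down the exact exponent-bearing variant during write-up.'' Matching that exponent is the entire substance of the lemma --- everything else is relabeling --- so deferring it leaves the proof incomplete.

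For the record, the identity is correct as stated and your route can be completed: apply the $q$-Vandermonde identity in the form $\binom{a+b}{M}_q=\sum_{u}\binom{a}{u}_q\binom{b}{M-u}_q\,q^{u(b-M+u)}$ with $a=n-t$, $b=t$, and $u=m$. Writing $M-m=t-j$ gives $M=m+t-j$, the factor $\binom{t}{t-j}_q=\binom{t}{j}_q$, and the exponent $m\bigl(t-(M-m)\bigr)=m\bigl(t-(t-j)\bigr)=jm$, exactly as in the statement. This is the variant with the roles of $t$ and $n-t$ interchanged relative to the one quoted in the paper's introduction, which is why your first attempt kept landing on $(t-j)$ in the exponent. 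The paper itself takes a different, self-contained route: it proves the identity by induction on $t$ using only the recurrence $\binom{n}{m}_q=q^m\binom{n-1}{m}_q+\binom{n-1}{m-1}_q$, so it never needs to cite an external form of $q$-Vandermonde. Your approach, once the correct variant is identified and the reindexing is actually written out, is shorter; the paper's is longer but immune to the exponent-matching pitfall you ran into.
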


\begin{proof}
We prove the result by induction on $t$.  The base case $t=0$ is trivial, so assume the result holds for $t$, and we show it is true for $t+1$.  Based on our assumption, we have
\begin{equation*}\begin{split}
\sum_{m=0}^n \zeta_{2c}^m &P(m) \binom{n}{m}_q = \sum_{m=0}^{n-t} \sum_{j=0}^t \zeta_{2c}^m \zeta_{2c}^{t-j}P(m+t-j)q^{jm}\binom{t}{j}_q \left(q^m\binom{n-t-1}{m}_q+\binom{n-t-1}{m-1}_q\right) \\
&= \sum_{m=0}^{n-t-1} \sum_{j=0}^t \zeta_{2c}^m \zeta_{2c}^{t-j}P(m+t-j)q^{(j+1)m}\binom{t}{j}_q \binom{n-t-1}{m}_q 
\\&\quad + \sum_{m=0}^{n-t-1} \sum_{j=0}^t \zeta_{2c}^{m+1} \zeta_{2c}^{t-j}P(m+1+t-j)q^{j(m+1)}\binom{t}{j}_q\binom{n-t-1}{m}_q \\
&= \sum_{m=0}^{n-t-1} \sum_{j=0}^t \zeta_{2c}^m \zeta_{2c}^{t-j}P(m+t-j)q^{(j+1)m} \binom{t}{j}_q \binom{n-t-1}{m}_q 
\\&\quad + \sum_{m=0}^{n-t-1} \sum_{j=0}^t \zeta_{2c}^{m+1} \zeta_{2c}^{t-j}P(m+1+t-j)q^{jm}\left(\binom{t+1}{j}_q-\binom{t}{j-1}_q\right)\binom{n-t-1}{m}_q.
\end{split}\end{equation*}
By shifting indices once again, we're left with
\begin{equation*}\begin{split}
\sum_{m=0}^n \zeta_{2c}^m &P(m) \binom{n}{m}_q = \sum_{m=0}^{n-t-1} \sum_{j=0}^t \zeta_{2c}^m \zeta_{2c}^{t-j}P(m+t-j)q^{(j+1)m} \binom{t}{j}_q \binom{n-t-1}{m}_q 
\\&\quad + \sum_{m=0}^{n-t-1} \sum_{j=0}^t \zeta_{2c}^{m+1} \zeta_{2c}^{t-j}P(m+1+t-j)q^{jm}\binom{t+1}{j}_q\binom{n-t-1}{m}_q
\\&\quad -\sum_{m=0}^{n-t-1} \sum_{j=0}^{t-1} \zeta_{2c}^{m+1} \zeta_{2c}^{t-j-1}P(m+t-j)q^{(j+1)m}\binom{t}{j}_q\binom{n-t-1}{m}_q \\
&=\sum_{m=0}^{n-t-1} \zeta_{2c}^m P(m)q^{(t+1)m} \binom{n-t-1}{m}_q \\&\quad + \sum_{m=0}^{n-t-1} \sum_{j=0}^t \zeta_{2c}^{m+1} \zeta_{2c}^{t-j}P(m+1+t-j)q^{jm}\binom{t+1}{j}_q\binom{n-t-1}{m}_q \\
&= \sum_{m=0}^{n-t-1}\sum_{j=0}^{t+1} \zeta_{2c}^m \zeta_{2c}^{t+1-j}P(m+t+1-j)q^{jm}\binom{t+1}{j}_q \binom{n-t-1}{m}_q.
\end{split}\end{equation*}
This proves the lemma.
\end{proof}

\section{The case $l=0$}\label{sec:no.derivative}

In this section, we prove Theorem \ref{th:main.theorem} when $l=0$.  The basic strategy is to proceed by induction on the various parameters of interest: $n,\deg(P), d$, and $z$. Throughout the section, we use $c$ to denote a positive integer and $k$ an odd natural number.  In the same way, variables $l, d, z$, and $n$ are reserved for natural numbers.  The term $\zeta_{2c}$ stands for a fixed, primitive $2c$-th root of unity.

\subsection{First order vanishing}
\begin{lemma}\label{le:d.zero.and.degree.P.zero}
If $a \in \Z[\zeta_{2c}]$ and $n \geq kc$, then $Q^{(0)}_c(a,0,n) \equiv 0 \imod{\Phi_{kc}(q)}$.
\end{lemma}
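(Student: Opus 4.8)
The plan is to derive the congruence directly from the Chu--Vandermonde analog (Lemma \ref{le:critical.identity}) by choosing $t$ to be $kc$ and then reducing modulo $\Phi_{kc}(q)$. First I would note that since $P(x)=a$ is constant we have $Q^{(0)}_c(a,0,n) = a\sum_{m=0}^n \zeta_{2c}^m\binom{n}{m}_q$, so it suffices to prove $\sum_{m=0}^n \zeta_{2c}^m\binom{n}{m}_q \equiv 0 \pmod{\Phi_{kc}(q)}$. The hypothesis $n\geq kc$ is exactly what licenses applying Lemma \ref{le:critical.identity} with $t=kc$ and $P(x)=1$, rewriting the sum as
$$\sum_{m=0}^n \zeta_{2c}^m\binom{n}{m}_q = \sum_{m=0}^{n-kc}\sum_{j=0}^{kc} \zeta_{2c}^m\,\zeta_{2c}^{kc-j}\,q^{jm}\binom{kc}{j}_q\binom{n-kc}{m}_q.$$

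Next I would reduce the right-hand side modulo $\Phi_{kc}(q)$. By Lemma \ref{le:free.divisibility}, $\Phi_{kc}(q)\mid\binom{kc}{j}_q$ whenever $0<j<kc$, so only the boundary terms $j=0$ and $j=kc$ survive, and there $\binom{kc}{0}_q=\binom{kc}{kc}_q=1$. The $j=0$ term carries the scalar $\zeta_{2c}^{kc}$, while the $j=kc$ term carries $\zeta_{2c}^0 q^{kcm}=q^{kcm}$. Two elementary observations then close the argument: since $\zeta_{2c}^c=-1$ and $k$ is odd, $\zeta_{2c}^{kc}=(-1)^k=-1$; and since $\Phi_{kc}(q)\mid 1-q^{kc}$, we get $q^{kcm}\equiv 1 \pmod{\Phi_{kc}(q)}$. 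Hence, modulo $\Phi_{kc}(q)$, the two surviving families of terms are $-\sum_{m=0}^{n-kc}\zeta_{2c}^m\binom{n-kc}{m}_q$ and $+\sum_{m=0}^{n-kc}\zeta_{2c}^m\binom{n-kc}{m}_q$, which cancel. Multiplying back by $a$ gives the lemma.

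I do not anticipate a serious obstacle: once the instance $t=kc$ of Lemma \ref{le:critical.identity} is selected, everything is bookkeeping. The only points deserving care are that $t=kc$ is an admissible choice (precisely where $n\geq kc$ enters) and that the degenerate case $n=kc$, in which the outer sum collapses to its $m=0$ term, still yields the same cancellation. As a sanity check and an alternative route, one can instead induct on $n$ using the three-term recursion of Lemma \ref{le:n.in.terms.of.n.minus.one.and.two}, which for constant $P$ reads $Q^{(0)}_c(a,0,n) = (1+\zeta_{2c})Q^{(0)}_c(a,0,n-1) - \zeta_{2c}(1-q^{n-1})Q^{(0)}_c(a,0,n-2)$: the base case $n=kc$ follows from Lemma \ref{le:free.divisibility} together with $\zeta_{2c}^{kc}=-1$, the case $n=kc+1$ follows because $\Phi_{kc}(q)\mid 1-q^{kc}$ annihilates the last term, and the inductive step is immediate.
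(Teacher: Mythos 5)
Your proof is correct, but your primary route is genuinely different from the paper's. The paper proves this lemma by induction on $n$: it computes the base case $n=kc$ directly (the terms with $0<m<kc$ die by Lemma \ref{le:free.divisibility}, and the endpoints give $a-a$ since $\zeta_{2c}^{kc}=-1$), handles $n=kc+1$ using $\Phi_{kc}(q)\mid 1-q^{kc}$, and then climbs via the three-term recursion of Lemma \ref{le:n.in.terms.of.n.minus.one.and.two} --- exactly the ``alternative route'' you sketch at the end. Your main argument instead applies the Chu--Vandermonde analog (Lemma \ref{le:critical.identity}) with $t=kc$ once, for arbitrary $n\geq kc$, and gets the cancellation immediately: the interior $j$'s vanish by Lemma \ref{le:free.divisibility}, and the $j=0$ and $j=kc$ boundary terms cancel because $\zeta_{2c}^{kc}=-1$ and $q^{kcm}\equiv 1 \pmod{\Phi_{kc}(q)}$. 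This is in fact the same mechanism the paper deploys later (in Lemmas \ref{le:d.zero.and.z.zero} and \ref{le:base.case.for.higher.vanishing}) at the threshold values of $n$, where nonconstant $P$ forces extra bookkeeping; you have simply observed that for constant $P$ the two surviving boundary sums cancel outright, so no induction on $n$ is needed at all. What your approach buys is brevity and uniformity in $n$; what the paper's approach buys is that the recursion in Lemma \ref{le:n.in.terms.of.n.minus.one.and.two} is introduced and exercised early, since it is needed anyway for the off-threshold values of $n$ in the subsequent lemmas (and it yields the Gaussian Formula as a byproduct). Both arguments are complete and correct.
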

\begin{proof}
We induct on $n$, starting with $n=kc$.  Since
$$Q^{(0)}_c(a,0,kc) = a \sum_{m=0}^{kc} (\zeta_{2c})^m \binom{kc}{m}_q,$$
Lemma \ref{le:free.divisibility} tells us that every term with $0<m<kc$ is divisible by $\Phi_{kc}(q)$.  The summands corresponding to $m=0$ and $m=kc$ then combine to give
$$Q^{(0)}_c(a,0,kc) \equiv a(\zeta_{2c})^0\binom{kc}{0}_q + a(\zeta_{2c})^{kc}\binom{kc}{kc}_q \equiv a-a \equiv 0 \imod{\Phi_{kc}(q)}.$$  Thus the result follows in this case.

When $n = kc+1$, Lemma \ref{le:n.in.terms.of.n.minus.one.and.two} tells us that
$$Q^{(0)}_{c}(a,0,kc+1) = (1+\zeta_{2c})Q^{(0)}_c\left(a,0,kc\right)-\zeta_{2c}(1-q^{kc})Q^{(0)}_c(a,0,kc-1).$$ Since $Q^{(0)}_c(a,0,kc) \equiv 0 \imod{\Phi_{kc}(q)}$ by the previous case, the first term vanishes modulo $\Phi_{kc}(q)$.  The second summand vanishes modulo $\Phi_{kc}(q)$ because $\Phi_{kc}(q) \mid (1-q^{kc})$. 

Now when $n \geq kc+2$, we again apply Lemma \ref{le:n.in.terms.of.n.minus.one.and.two} and notice that both terms vanish modulo $\Phi_{kc}(q)$ by induction.
\end{proof}

\begin{lemma}\label{le:d.zero.and.z.zero}
If $P \in \Z[\zeta_{2c}][x]$ and $n \geq (\deg(P)+1)kc$, then $Q^{(0)}_c(P(x),0,n) \equiv 0 \imod{\Phi_{kc}(q)}$.
\end{lemma}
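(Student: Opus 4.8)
The plan is to induct on $\deg(P)$, taking Lemma \ref{le:d.zero.and.degree.P.zero} as the base case $\deg(P)=0$ (where the hypothesis $n \geq (\deg(P)+1)kc$ becomes exactly $n \geq kc$). For the inductive step the key move is to invoke the Chu--Vandermonde analog (Lemma \ref{le:critical.identity}) with the special value $t = kc$. Since $n \geq (\deg(P)+1)kc \geq kc$ this specialization is legitimate, and it rewrites $Q^{(0)}_c(P(x),0,n)$ as a double sum over $0 \le m \le n-kc$ and $0 \le j \le kc$ whose summand carries the factor $\binom{kc}{j}_q$.

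Next I would reduce modulo $\Phi_{kc}(q)$. By Lemma \ref{le:free.divisibility}, every term with $0 < j < kc$ is divisible by $\Phi_{kc}(q)$ and so drops out, leaving only the contributions from $j=0$ and $j=kc$. In the $j=0$ term one uses $\zeta_{2c}^{kc} = (\zeta_{2c}^c)^k = (-1)^k = -1$ — this is exactly where the oddness of $k$ is needed — and in the $j=kc$ term one uses $q^{kc} \equiv 1 \imod{\Phi_{kc}(q)}$ to erase the factor $q^{kcm}$. After a routine index adjustment what survives is
\[
Q^{(0)}_c(P(x),0,n) \equiv \sum_{m=0}^{n-kc} \zeta_{2c}^m\bigl(P(m) - P(m+kc)\bigr)\binom{n-kc}{m}_q = Q^{(0)}_c\bigl(\widetilde P(x),\,0,\,n-kc\bigr) \imod{\Phi_{kc}(q)},
\]
where $\widetilde P(x) = P(x) - P(x+kc) \in \Z[\zeta_{2c}][x]$.

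The point of this manoeuvre is that the leading terms of $P(x)$ and $P(x+kc)$ cancel, so $\deg(\widetilde P) \le \deg(P)-1$; combined with $n - kc \geq (\deg(P)+1)kc - kc = \deg(P)\cdot kc \geq (\deg(\widetilde P)+1)kc$, the inductive hypothesis applies to $Q^{(0)}_c(\widetilde P(x),0,n-kc)$ and yields the claim. (An alternative to Lemma \ref{le:critical.identity} here would be the recursion of Lemma \ref{le:n.in.terms.of.n.minus.one.and.two}, but that introduces $P(x+1)$ of the same degree and so does not obviously drive the induction; the $t=kc$ substitution is what converts a degree-$D$ problem at level $n$ into a degree-$(D-1)$ problem at level $n-kc$.) I do not anticipate a serious obstacle: the only things needing care are the bookkeeping of the index shift inside Lemma \ref{le:critical.identity} when $j \in \{0, kc\}$ and confirming the degree drop of $\widetilde P$, which is immediate.
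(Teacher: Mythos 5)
Your proof is correct, and it shares the paper's skeleton --- induction on $\deg(P)$ anchored at Lemma \ref{le:d.zero.and.degree.P.zero}, the Chu--Vandermonde analog with $t=kc$, and Lemma \ref{le:free.divisibility} to discard the terms $0<j<kc$ --- but your treatment of the two surviving terms is genuinely different and simpler. The paper does not reduce $q^{kcm}$ to $1$; it keeps the $j=kc$ contribution as $\sum_m \zeta_{2c}^m P(m)(q^{kcm}-1)\binom{n-kc}{m}_q$, factors $q^{kcm}-1=(q^m-1)\sum_{i=0}^{kc-1}q^{im}$, and uses $(1-q^m)\binom{n-kc}{m}_q=(1-q^{n-kc})\binom{n-kc-1}{m-1}_q$ to extract a factor of $q^{n-kc}-1$, which is divisible by $\Phi_{kc}(q)$ only when $kc\mid n$. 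That forces an inner induction on $n$: the Chu--Vandermonde step is applied only at the anchor $n=(\deg(P)+1)kc$, and larger $n$ are reached through Lemma \ref{le:n.in.terms.of.n.minus.one.and.two}. Your observation that $q^{kcm}=(q^{kc})^m\equiv 1\imod{\Phi_{kc}(q)}$ holds for every $n\geq kc$, so you obtain the single congruence $Q^{(0)}_c(P(x),0,n)\equiv Q^{(0)}_c(P(x)-P(x+kc),0,n-kc)\imod{\Phi_{kc}(q)}$ with no divisibility condition on $n$ and no inner induction --- a real streamlining, and your degree and range bookkeeping ($\deg(P(x)-P(x+kc))\le\deg(P)-1$ and $n-kc\ge\deg(P)\cdot kc$) is right. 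The one caveat worth recording is that your shortcut is specific to first-order vanishing: modulo $\Phi_{kc}(q)^{d+1}$ with $d\ge 1$ (Lemma \ref{le:base.case.for.higher.vanishing} and Theorem \ref{th:big.theorem.when.z.zero}) one only has $q^{kcm}\equiv 1$ to first order, so the decomposition of $q^{kcm}-1$ and the resulting factor $q^{n-kc}-1$ become essential there; the paper simply deploys that heavier template uniformly, already at $d=0$.
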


\begin{proof}
We prove this by induction on $\deg(P)$, with the case $\deg(P) = 0$ handled in Lemma \ref{le:d.zero.and.degree.P.zero}.  So suppose that we know $Q^{(0)}_c(P(x),0,n) \equiv 0 \imod{\Phi_{kc}(q)}$ when $\deg(P)<b$ and $n \geq (\deg(P)+1)kc$, and we study $Q^{(0)}_c(P(x),0,n)$ when $\deg(P)=b$ and $n \geq (b+1)kc$.

We prove the result by induction on $n$, starting with $n=(b+1)kc$. In this case, using the Chu-Vandermonde Analog (Lemma \ref{le:critical.identity}) with $t=kc$ gives
\begin{equation*}\begin{split}
Q^{(0)}_c(P(x),0,n) &= \sum_{m=0}^{n-kc} \sum_{j=0}^{kc} \zeta_{2c}^m\zeta_{2c}^{kc-j}P(m+kc-j)q^{jm}\binom{kc}{j}_q \binom{n-kc}{m}_q \\
=&\sum_{j=0}^{kc} \zeta_{2c}^{kc-j}\binom{kc}{j}_q  \sum_{m=0}^{n-kc} \zeta_{2c}^mP(m+kc-j)q^{jm}\binom{n-kc}{m}_q.
\end{split}\end{equation*}
Each of the summands with $0<j<kc$ is divisible by $\Phi_{kc}(q)$ by Lemma \ref{le:free.divisibility}, and hence we have the following congruence relation modulo $\Phi_{kc}(q)$:
\begin{equation*}\begin{split}
Q^{(0)}_c(P(x),0,n) &\equiv \sum_{m=0}^{n-kc} \zeta_{2c}^mP(m)q^{kcm}\binom{n-kc}{m}_q - \sum_{m=0}^{n-kc}\zeta_{2c}^mP(m+kc)\binom{n-kc}{m}_q \\
&\equiv \sum_{m=0}^{n-kc} \zeta_{2c}^mP(m)(q^{kcm}-1)\binom{n-kc}{m}_q + \sum_{m=0}^{n-kc}\zeta_{2c}^m(P(m)-P(m+kc))\binom{n-kc}{m}_q. \\
\end{split}\end{equation*}
Now the latter sum is $Q^{(0)}_c(P(x)-P(x+kc),0,n-kc)$, and since $\deg(P(x)-P(x+kc))\leq b-1$ with $n-kc \geq ((b-1)+1)kc$, induction on $\deg(P)$ tells us that this summand is congruent to $0$ modulo $\Phi_{kc}(q)$.  As for the former sum, note that $q^{kcm}-1 = (q^m-1)\sum_{i=0}^{kc-1} q^{im}$.  Hence
\begin{equation*}\begin{split}
Q^{(0)}_c(P(x),0,n) &\equiv \sum_{m=0}^{n-kc} \zeta_{2c}^mP(m)(q^{kcm}-1)\binom{n-kc}{m}_q \\
&\equiv (q^{n-kc}-1)\sum_{i=0}^{kc-1}\sum_{m=0}^{n-kc} \zeta_{2c}^mP(m)q^{im}\binom{n-kc-1}{m-1}_q \imod{\Phi_{kc}(q)}.
\end{split}\end{equation*}  Since $kc \mid n$ we have $\Phi_{kc}(q)\mid (q^{n-kc}-1)$, and the result follows. This resolves the case $n=(b+1)kc$.

Now when $n = (b+1)kc+1$, Lemma \ref{le:n.in.terms.of.n.minus.one.and.two} tells us that 
$$Q^{(0)}_c(P(x),0,n) = Q^{(0)}_c\left(P(x)+\zeta_{2c}P(x+1),0,n-1\right)-\zeta_{2c}(1-q^{n-1})Q^{(0)}_c(P(x+1),0,n-2).$$
Note that since $n=(b+1)kc+1$, the first summand is congruent to $0$ modulo $\Phi_{kc}(q)$ by the previous base case on $n$.  As for the second summand, the factor $1-q^{n-1}$ contains a factor of $\Phi_{kc}(q)$. Hence this second summand is also congruent to $0$ modulo $\Phi_{kc}(q)$.

Finally, when $n \geq (b+1)kc+2$, Lemma \ref{le:n.in.terms.of.n.minus.one.and.two} allows us to use induction on $n$ to verify the desired congruence.
\end{proof}

\begin{lemma}\label{le:d.zero.in.general}
If $n \geq (\deg(P)+1)kc$, then $Q^{(0)}_c(P(x),z,n) \equiv 0 \imod{\Phi_{kc}(q)}$.
\end{lemma}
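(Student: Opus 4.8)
The plan is to induct on $z$, with the base case $z = 0$ being exactly Lemma \ref{le:d.zero.and.z.zero}; the only new ingredient needed is a recursion that lowers the exponent $z$ by one. To get it, for $z \geq 1$ write $(q^z)^m = (q^{z-1})^m\, q^m$ and use Lemma \ref{le:uncited.lemma}(2) in the rearranged form $q^m\binom{n}{m}_q = \binom{n}{m}_q - (1-q^n)\binom{n-1}{m-1}_q$. Substituting this into the definition of $Q^{(0)}_c(P(x),z,n)$ and shifting the index $m \mapsto m+1$ in the resulting sum over $\binom{n-1}{m-1}_q$ (whose $m=0$ term vanishes) yields
$$Q^{(0)}_c(P(x),z,n) = Q^{(0)}_c(P(x),z-1,n) - \zeta_{2c}\, q^{z-1}(1-q^n)\, Q^{(0)}_c(P(x+1),z-1,n-1),$$
a short computation in the style of Lemma \ref{le:n.in.terms.of.n.minus.one.and.two}.

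With this recursion in hand, assume the lemma holds with $z$ replaced by $z-1$, for every polynomial and every admissible $n$, and suppose $n \geq (\deg(P)+1)kc$. The first term $Q^{(0)}_c(P(x),z-1,n)$ is $\equiv 0 \imod{\Phi_{kc}(q)}$ by the inductive hypothesis, since both the degree of the polynomial and the lower bound on $n$ are unchanged. For the second term I would split into cases: if $n > (\deg(P)+1)kc$ then (taking $P \neq 0$, the case $P = 0$ being trivial) $n-1 \geq (\deg(P(x+1))+1)kc$, so $Q^{(0)}_c(P(x+1),z-1,n-1) \equiv 0 \imod{\Phi_{kc}(q)}$ by the inductive hypothesis; while if $n = (\deg(P)+1)kc$ then $kc \mid n$, so $\Phi_{kc}(q) \mid (1-q^n)$. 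In either case the second term vanishes modulo $\Phi_{kc}(q)$, completing the induction.

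The only subtle point --- and the closest thing to an obstacle --- is that passing from $n$ to $n-1$ in the second term of the recursion can break the hypothesis $n - 1 \geq (\deg(P)+1)kc$ needed to apply the inductive hypothesis there. This fails for exactly one value, $n = (\deg(P)+1)kc$, and for that value the factor $1-q^n$ already supplies the required power of $\Phi_{kc}(q)$; the case split above is precisely what handles this boundary.
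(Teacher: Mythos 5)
Your proof is correct, and it follows the paper's overall strategy --- induction on $z$ with Lemma \ref{le:d.zero.and.z.zero} as the base case --- but it uses a genuinely different recursion for the inductive step. The paper absorbs the extra factor of $q^m$ via Pascal's rule (Lemma \ref{le:uncited.lemma}(1)), writing $q^m\binom{n}{m}_q = \binom{n+1}{m}_q - \binom{n}{m-1}_q$, which yields
$$Q^{(0)}_c(P(x),z+1,n) = Q^{(0)}_c(P(x),z,n+1)-\zeta_{2c}q^zQ^{(0)}_c(P(x+1),z,n).$$
Since this moves $n$ \emph{up} rather than down, both terms satisfy the hypothesis $n \geq (\deg(P)+1)kc$ automatically and the induction closes with no case analysis. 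You instead use Lemma \ref{le:uncited.lemma}(2), writing $q^m\binom{n}{m}_q = \binom{n}{m}_q - (1-q^n)\binom{n-1}{m-1}_q$, which produces a term at level $n-1$ and therefore forces the boundary split at $n = (\deg(P)+1)kc$; you handle that case correctly by observing that $kc \mid n$ makes $\Phi_{kc}(q)$ divide $1-q^n$. Your identity and index shift check out, so the argument is complete. The trade-off is that the paper's recursion is cleaner here (no boundary case) and is reused verbatim in the higher-multiplicity arguments of Lemma \ref{le:base.case.for.higher.vanishing} and Theorem \ref{th:big.theorem.when.z.zero}, whereas your version carries the explicit factor $1-q^n$, which would actually be an asset if one wanted to track where extra cyclotomic factors come from, but costs an extra case in each application.
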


\begin{proof}
We prove this result by induction on $z \in \Z_{\geq 0}$, with Lemma \ref{le:d.zero.and.z.zero} as the base case.  So assume that $\Phi_{kc}(q) \mid Q^{(0)}_c(P(x),z,n)$ for every $n \geq (\deg(P)+1)kc$, and we consider $Q^{(0)}_c(P(x),z+1,n)$.  We have
\begin{equation}\label{eq:z.plus.one.in.terms.of.z}\begin{split}
Q^{(0)}_c(P(x),z+1,n) &= \sum_{m=0}^n \zeta_{2c}^m P(m)q^{zm}\left(\binom{n+1}{m}_q-\binom{n}{m-1}_q\right) \\
&= \sum_{m=0}^n \zeta_{2c}^m P(m)q^{zm}\binom{n+1}{m}_q-\zeta_{2c}q^z\sum_{m=0}^{n-1} \zeta_{2c}^m P(m+1)q^{zm}\binom{n}{m}_q \\
&= Q^{(0)}_c(P(x),z,n+1)-\zeta_{2c}^{n+1}P(n+1)q^{z(n+1)} \\& \quad -\zeta_{2c}q^zQ^{(0)}_c(P(x+1),z,n) + \zeta_{2c}q^z \zeta_{2c}^n P(n+1)q^{nz} \\
&= Q^{(0)}_c(P(x),z,n+1)-\zeta_{2c}q^zQ^{(0)}_c(P(x+1),z,n).
\end{split}\end{equation}
Since these two terms are each divisible by $\Phi_{kc}(q)$ by induction, the result follows.
\end{proof}

\subsection{Higher order vanishing}

Now that we've shown that the polynomials of interest are divisible by the given cyclotomic factors, we account for multiplicities.  Again, the key result will be Lemma \ref{le:critical.identity}.

\begin{lemma}\label{le:base.case.for.higher.vanishing}
If $n \geq (2d+1)kc$ and $a \in \Z[\zeta_{2c}]$, then $Q^{(0)}_c(a,z,n) \equiv 0 \imod{\Phi_{kc}(q)^{d+1}}$.
\end{lemma}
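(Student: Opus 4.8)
The plan is to prove this by induction on $d$, with Lemma~\ref{le:d.zero.in.general} supplying the base case $d=0$ (taking $P(x) = a$, a constant, so that $\deg(P) = 0$ and the hypothesis $n \geq (2\cdot 0 + 1)kc = kc$ matches). So I would assume the statement holds for all smaller values of $d$ and all constants $a \in \Z[\zeta_{2c}]$, and then prove it for $d$. Inside that, I would run a secondary induction on $n$, with base case $n = (2d+1)kc$, mirroring the architecture of Lemma~\ref{le:d.zero.and.z.zero}.

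For the base case $n = (2d+1)kc$, the natural move is to invoke the Chu-Vandermonde Analog (Lemma~\ref{le:critical.identity}) with $t = kc$, just as in Lemma~\ref{le:d.zero.and.z.zero}, but now I need to extract a higher power of $\Phi_{kc}(q)$. Writing
$$Q^{(0)}_c(a,z,n) = \sum_{j=0}^{kc} \zeta_{2c}^{kc-j}\binom{kc}{j}_q \sum_{m=0}^{n-kc} \zeta_{2c}^m a \, q^{(j+z)m}\binom{n-kc}{m}_q,$$
Lemma~\ref{le:free.divisibility} kills the terms $0 < j < kc$ modulo $\Phi_{kc}(q)$ — but to get modulo $\Phi_{kc}(q)^{d+1}$ I expect to need more care than a single application. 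The idea is that each middle term $\binom{kc}{j}_q$ contributes one factor of $\Phi_{kc}(q)$, while the inner sum $\sum_m \zeta_{2c}^m a \, q^{(j+z)m}\binom{n-kc}{m}_q = Q^{(0)}_c(a, j+z, n-kc)$ is itself divisible by $\Phi_{kc}(q)^{d}$ by the induction on $d$, since $n - kc = (2d-1)kc = (2(d-1)+1)kc$ meets the hypothesis with $d$ replaced by $d-1$. For the boundary terms $j = 0$ and $j = kc$, I would combine them as in the earlier proof: the $q^{(z)m}$ and $q^{(kc+z)m}$ pieces differ by $q^{kcm} - 1 = (q^m-1)\sum_{i=0}^{kc-1}q^{im}$, and after using Lemma~\ref{le:uncited.lemma}(2) to pull out $(q^{n-kc}-1)$ — which is divisible by $\Phi_{kc}(q)$ since $kc \mid n - kc$ — the remaining sum should again be of the form $Q^{(0)}_c(\text{const}, \cdot, n-kc-1)$ (or a sum of such), divisible by $\Phi_{kc}(q)^{d}$ by the $d$-induction after checking $n - kc - 1 \geq (2(d-1)+1)kc$, i.e. $n \geq 2dkc + 1$, which holds. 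Putting the pieces together should yield one factor of $\Phi_{kc}(q)$ from the structural reduction times $\Phi_{kc}(q)^d$ from induction, giving $\Phi_{kc}(q)^{d+1}$.

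Once the base case $n = (2d+1)kc$ is in hand, the induction step on $n$ is routine and follows the template already used twice: Lemma~\ref{le:n.in.terms.of.n.minus.one.and.two} (or rather its variant for general $z$, equation~\eqref{eq:z.plus.one.in.terms.of.z} combined with the recursion, or a direct analog) expresses $Q^{(0)}_c(a,z,n)$ in terms of $Q^{(0)}_c$ at $n-1$ and $n-2$ together with a factor $(1-q^{n-1})$ on the last piece. For $n = (2d+1)kc + 1$, the $n-1$ term is the base case (divisible by $\Phi_{kc}(q)^{d+1}$) and the $n-2$ term carries the factor $1 - q^{n-1} = 1 - q^{(2d+1)kc}$, which is divisible by $\Phi_{kc}(q)$ but not obviously by $\Phi_{kc}(q)^{d+1}$ — so here I would want the $n-2$ term $Q^{(0)}_c(a,z,n-2)$ to already be divisible by $\Phi_{kc}(q)^{d}$. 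That requires $n - 2 \geq (2d-1)kc$, which holds comfortably; combined with the one extra factor from $1-q^{n-1}$ this gives $\Phi_{kc}(q)^{d+1}$. For $n \geq (2d+1)kc + 2$, both recursive terms are divisible by $\Phi_{kc}(q)^{d+1}$ by the induction on $n$, so we're done. (A subtlety: the recursion of Lemma~\ref{le:n.in.terms.of.n.minus.one.and.two} is stated for $z=0$; I would either re-derive the $z$-general version or absorb the $q^z$ factors using a manipulation like equation~\eqref{eq:z.plus.one.in.terms.of.z}.)

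\textbf{Main obstacle.} The crux is the base case $n = (2d+1)kc$: specifically, bookkeeping the powers of $\Phi_{kc}(q)$ so that the Chu-Vandermonde splitting genuinely produces $\Phi_{kc}(q)^{d+1}$ and not just $\Phi_{kc}(q)^{d}$. The delicate point is ensuring that the reductions of the boundary terms ($j=0$ and $j=kc$) land on instances of $Q^{(0)}_c$ whose parameter $n$ is \emph{still} large enough to invoke the $d-1$ case of the inductive hypothesis, and that the single ``gained'' factor of $\Phi_{kc}(q)$ (from $\binom{kc}{j}_q$ for the middle terms, from $q^{n-kc}-1$ for the boundary terms) is not being double-counted. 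I expect the degree/size inequalities to all work out with the stated bound $n \geq (2d+1)kc$, but verifying that margin carefully is where the real work lies.
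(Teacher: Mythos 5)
Your proposal matches the paper's proof in all essentials: induction on $d$ with Lemma \ref{le:d.zero.in.general} as the base case, the Chu--Vandermonde Analog with $t=kc$ to settle $n=(2d+1)kc$ (one factor of $\Phi_{kc}(q)$ coming from $\binom{kc}{j}_q$ for $0<j<kc$ or from $q^{n-kc}-1$ for the combined boundary terms, times $\Phi_{kc}(q)^d$ from the $d$-induction), and the recursion of Lemma \ref{le:n.in.terms.of.n.minus.one.and.two} for larger $n$. The only organizational difference is that the paper fixes $z=0$ throughout and disposes of general $z$ at the very end via equation \eqref{eq:z.plus.one.in.terms.of.z}, rather than threading $q^{zm}$ through the Chu--Vandermonde step (your displayed $z$-generalized identity is off by harmless unit factors, powers of $q^z$, on each term); the fallback you yourself suggest is exactly what the paper does.
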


\begin{proof}
We verify the result by induction on $d \in \Z_{\geq 0}$.  The base case $d=0$ is handled in Lemma \ref{le:d.zero.in.general}, so suppose we know that $Q^{(0)}_c(a,z,n) \equiv 0 \imod{\Phi_{kc}(q)^d}$ for $n \geq (2(d-1)+1)kc$, and we consider $Q^{(0)}_c(a,z,n)$ for $n \geq (2d+1)kc$.

We begin with $z=0$.  We prove the result by induction on $n$, starting with $n=(2d+1)kc$. In this case, by the Chu-Vandermonde Analog (Lemma \ref{le:critical.identity}) with $t=kc$, we find
$$Q^{(0)}_c(a,0,n) = \sum_{j=0}^{kc}\zeta^{kc-j}_{2c} \binom{kc}{j}_q Q^{(0)}_c(a,j,n-kc).$$  For $0<j<kc$ the term $\binom{kc}{j}_q$ contains a factor of $\Phi_{kc}(q)$, and by induction on $d$ we know that $Q^{(0)}_c(a,j,n-kc)$ is divisible by $\Phi_{kc}(q)^d$.  Hence we have
\begin{equation*}\begin{split}Q^{(0)}_c(a,0,n) &\equiv a \sum_{m=0}^{n-kc}\zeta_{2c}^m(q^{kcm}-1)\binom{n-kc}{m}_q\\
&\equiv a(q^{n-kc}-1)\sum_{i=0}^{kc-1} \sum_{m=0}^{n-kc} \zeta_{2c}^m q^{im} \binom{n-kc-1}{m-1}_q\\
&\equiv (q^{n-kc}-1)\sum_{i=0}^{kc-1}\zeta_{2c}q^i Q^{(0)}_c(a,i,n-kc-1) \imod{\Phi_{kc}(q)^{d+1}}.
\end{split}\end{equation*}
Since $kc \mid n$, the term $q^{n-kc}-1$ contains one factor of $\Phi_{kc}(q)$, and each of the terms $Q^{(0)}_c(a,i,n-kc-1)$ contains a factor of $\Phi_{kc}(q)^d$ by induction on $d$.  Hence the result follows when $n=(2d+1)kc$.

Now when $n = (2d+1)kc+1$, Lemma \ref{le:n.in.terms.of.n.minus.one.and.two} tells us that 
$$Q^{(0)}_{c}(a,0,n) = (1+\zeta_{2c})Q^{(0)}_c\left(a,0,n-1\right)-\zeta_{2c}(1-q^{n-1})Q^{(0)}_c(a,0,n-2).$$
Note that since $n=(2d+1)kc+1$, the first summand is congruent to $0$ modulo $\Phi_{kc}(q)^{d+1}$ by the previous base case on $n$.  As for the second summand, the factor $1-q^{n-1}$ is divisible by $\Phi_{kc}(q)$, whereas the term $Q^{(0)}_c(a,0,n-2)$ is divisible by $\Phi_{kc}(q)^{d}$ by induction on $d$.  Hence this second summand is also congruent to $0$ modulo $\Phi_{kc}(q)^{d+1}$.

Finally, when $n \geq (2d+1)kc+2$, Lemma \ref{le:n.in.terms.of.n.minus.one.and.two} allows us to use induction on $n$ to verify the desired congruence.

For $z \in \Z_{\geq 0}$ arbitrary, notice that equation (\ref{eq:z.plus.one.in.terms.of.z}) gives
$$Q^{(0)}_c(a,z+1,n)= Q^{(0)}_c(a,z,n+1)-\zeta_{2c}q^zQ^{(0)}_c(a,z,n),$$
and hence the result follows by induction on $z$.
\end{proof}

\begin{theorem}\label{th:big.theorem.when.z.zero}
If $n \geq (\deg(P)+2d+1)kc$, then $Q^{(0)}_c(P(x),z,n) \equiv 0 \imod{\Phi_{kc}(q)^{d+1}}$.
\end{theorem}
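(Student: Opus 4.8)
The plan is to reduce Theorem \ref{th:big.theorem.when.z.zero} to Lemma \ref{le:base.case.for.higher.vanishing} (the constant-polynomial case) by induction on $\deg(P)$, exactly mirroring the way Lemma \ref{le:d.zero.and.z.zero} was deduced from Lemma \ref{le:d.zero.and.degree.P.zero} in the first-order setting, but now tracking multiplicities. The outer induction is on $\deg(P) = b$, with base case $b = 0$ supplied by Lemma \ref{le:base.case.for.higher.vanishing}. Fixing $b \geq 1$ and assuming the theorem for all polynomials of degree less than $b$ (and all $d$, $z$), we then run an inner induction on $z$, whose base case $z = 0$ is itself established by a further innermost induction on $n$ starting from $n = (b + 2d + 1)kc$.

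For the innermost base case $n = (b + 2d + 1)kc$, I would apply the Chu–Vandermonde Analog (Lemma \ref{le:critical.identity}) with $t = kc$, discard the terms with $0 < j < kc$ (each carrying a factor $\Phi_{kc}(q)$, and multiplying a polynomial already divisible by $\Phi_{kc}(q)^{d}$ by the $d$-inductive hypothesis inside Lemma \ref{le:base.case.for.higher.vanishing}'s proof pattern—here instead by the degree induction, since $n - kc \geq (b + 2d)kc \geq ((b-1) + 2d + 1)kc$), and keep the $j = 0$ and $j = kc$ contributions. As in the proof of Lemma \ref{le:d.zero.and.z.zero}, those two terms combine to
\[
Q^{(0)}_c(P(x),0,n) \equiv Q^{(0)}_c\bigl(P(x)-P(x+kc),0,n-kc\bigr) + \sum_{m=0}^{n-kc}\zeta_{2c}^m P(m)(q^{kcm}-1)\binom{n-kc}{m}_q \imod{\Phi_{kc}(q)^{d+1}}.
\]
The first term has a polynomial of degree at most $b - 1$ and $n - kc \geq ((b-1) + 2d + 1)kc$, so it vanishes mod $\Phi_{kc}(q)^{d+1}$ by the degree induction. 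For the second term, write $q^{kcm} - 1 = (q^m - 1)\sum_{i=0}^{kc-1} q^{im}$ and use identity (2) of Lemma \ref{le:uncited.lemma} to pull out a factor $q^{n-kc} - 1$, which is divisible by $\Phi_{kc}(q)$ since $kc \mid n$; what remains is a sum of terms $Q^{(0)}_c(P(x), i, n - kc - 1)$ with $n - kc - 1 \geq (b + 2d)kc - 1 \geq (b + 2(d-1) + 1)kc$, hence each divisible by $\Phi_{kc}(q)^{d}$ by the $d$-induction—giving the full $\Phi_{kc}(q)^{d+1}$. The cases $n = (b+2d+1)kc + 1$ and $n \geq (b+2d+1)kc+2$ then follow from Lemma \ref{le:n.in.terms.of.n.minus.one.and.two} exactly as before: the $Q^{(0)}_c$ terms on the right have the required $n$-bounds, and the stray factor $1 - q^{n-1}$ contributes one extra $\Phi_{kc}(q)$ against a term already divisible by $\Phi_{kc}(q)^{d}$.

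Finally, for general $z$, equation (\ref{eq:z.plus.one.in.terms.of.z}) expresses $Q^{(0)}_c(P(x), z+1, n)$ as $Q^{(0)}_c(P(x), z, n+1) - \zeta_{2c} q^z Q^{(0)}_c(P(x+1), z, n)$; since $\deg(P(x+1)) = \deg(P)$ and both $n+1$ and $n$ meet the hypothesis bound whenever $n \geq (b + 2d + 1)kc$, the inductive step on $z$ closes immediately. The main obstacle I anticipate is purely bookkeeping: keeping the several nested inductions (on $\deg(P)$, on $d$ inherited through Lemma \ref{le:base.case.for.higher.vanishing}, on $z$, and on $n$) logically consistent, and in particular making sure that at the one point where a single factor of $\Phi_{kc}(q)$ is peeled off (the $q^{n-kc}-1$ term), the residual sum still satisfies an $n$-bound strong enough to invoke the $\Phi_{kc}(q)^d$ hypothesis. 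No new idea beyond those already deployed in Section \ref{sec:no.derivative} should be needed; the estimate $n - kc - 1 \geq (b + 2(d-1) + 1)kc$ is the inequality to verify carefully.
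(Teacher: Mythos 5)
Your overall route --- Chu--Vandermonde with $t=kc$, splitting off the $j=0$ and $j=kc$ terms, telescoping into $Q^{(0)}_c(P(x)-P(x+kc),0,n-kc)$ plus a term carrying $q^{n-kc}-1$, and the $n$- and $z$-steps via Lemma \ref{le:n.in.terms.of.n.minus.one.and.two} and equation (\ref{eq:z.plus.one.in.terms.of.z}) --- is exactly the paper's. But there is a genuine gap in the induction scheme you set up. You make the \emph{outermost} induction the one on $\deg(P)=b$, with inductive hypothesis ``the theorem for all polynomials of degree less than $b$ (and all $d$, $z$),'' and you describe the induction on $d$ as ``inherited through Lemma \ref{le:base.case.for.higher.vanishing}.'' That lemma, however, concerns only constant polynomials. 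At several points you need $\Phi_{kc}(q)^{d}$ to divide a $Q^{(0)}_c$-sum whose polynomial still has degree exactly $b$: the discarded terms $Q^{(0)}_c(P(x+kc-j),j,n-kc)$ with $0<j<kc$, the terms $Q^{(0)}_c(P(x+1),i,n-kc-1)$ left after peeling off $q^{n-kc}-1$ (note the shift to $P(x+1)$, a minor slip in your write-up), and $Q^{(0)}_c(P(x+1),0,n-2)$ in the step $n=(b+2d+1)kc+1$. Since $\deg P(x+kc-j)=\deg P(x+1)=b$, neither your degree induction (which covers only degrees $<b$) nor Lemma \ref{le:base.case.for.higher.vanishing} (constants only) justifies these divisibilities; your parenthetical appeal to ``the degree induction, since $n-kc\geq((b-1)+2d+1)kc$'' does not apply because the polynomial has not dropped in degree. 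What is actually needed is the full statement of Theorem \ref{th:big.theorem.when.z.zero} with $d$ replaced by $d-1$, for polynomials of arbitrary degree --- i.e.\ an explicit outermost induction on $d$.

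The repair is purely structural and is exactly what the paper does: induct first on $d$ (base case $d=0$ given by Lemma \ref{le:d.zero.in.general}), then on $\deg(P)$ (base case Lemma \ref{le:base.case.for.higher.vanishing}), then on $z$ and $n$. With the hypothesis ``$\Phi_{kc}(q)^{d}\mid Q^{(0)}_c(P(x),z,n)$ whenever $n\geq(\deg(P)+2(d-1)+1)kc$'' available for every degree, your estimates $n-kc\geq(b+2(d-1)+1)kc$ and $n-kc-1\geq(b+2(d-1)+1)kc$ go through verbatim and the rest of your argument is correct.
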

\begin{proof}
We prove this result by induction on $d$, with the case $d=0$ handled in Lemma \ref{le:d.zero.in.general}.  So suppose that we know $\Phi_{kc}(q)^d \mid Q^{(0)}_c(P(x),z,n)$ whenever $n \geq (\deg(P)+2(d-1)+1)kc$, and we consider $Q^{(0)}_c(P(x),z,n)$ when $n \geq (\deg(P)+2d+1)kc$.  

We proceed by induction on $\deg(P)$.  When $P \in \Z[\zeta_{2c}]$ (i.e., $\deg(P) = 0$), the result follows from Lemma \ref{le:base.case.for.higher.vanishing}.  So assume we know that $\Phi_{kc}(q)^{d+1} \mid Q^{(0)}_c(P(x),z,n)$ whenever $\deg(P)<b$ and $n \geq (\deg(P)+2d+1)kc$, and we consider $Q^{(0)}_c(P(x),z,n)$ when $\deg(P)=b$ and $n \geq (\deg(P)+2d+1)kc$.

We will proceed by induction on $z$, and so first consider $z=0$.  We prove the result by induction on $n$, starting with $n=(\deg(P)+2d+1)kc$.  We apply the Chu-Vandermonde Analog (Lemma \ref{le:critical.identity}) when $t=kc$:
\begin{equation*}\begin{split}
\sum_{m=0}^n \zeta_{2c}^m P(m) \binom{n}{m}_q &= \sum_{m=0}^{n-kc} \sum_{j=0}^{kc} \zeta_{2c}^m\zeta_{2c}^{kc-j}P(m+kc-j)q^{jm}\binom{kc}{j}_q \binom{n-kc}{m}_q \\
&= \sum_{j=0}^{kc}\zeta_{2c}^{kc-j}\binom{kc}{j}_q Q_c^{(0)}(P(x+kc-j),j,n-kc).\\
\end{split}\end{equation*}
For each of the terms $0<j<kc$ we know that $\Phi_{kc}(q) \mid \binom{kc}{j}_q$, and furthermore we know that $\Phi_{kc}(q)^d \mid Q^{(0)}_c(P(x+kc-j),j,n-kc)$ since $n-kc \geq (\deg(P)+2(d-1)+1)kc$.  Hence each of these terms vanishes modulo $\Phi_{kp}(q)^{d+1}$, and we are left with
\begin{equation*}\begin{split}
Q^{(0)}_c(P(x),0,n) &\equiv \sum_{m=0}^{n-kc}\zeta_{2c}^mP(m)q^{kcm}\binom{n-kc}{m}_q - \sum_{m=0}^{n-kc}\zeta_{2c}^m P(m+kc)\binom{n-kc}{m}_q \\
&\equiv \sum_{m=0}^{n-kc}\zeta_{2c}^mP(m)(q^{kcm}-1)\binom{n-kc}{m}_q + \sum_{m=0}^{n-kc}\zeta_{2c}^m \left(P(m)- P(m+kc)\right)\binom{n-kc}{m}_q \\
&\equiv (q^{n-kc}-1)\sum_{i=0}^{kc-1}\sum_{m=0}^{n-kc}\zeta_{2c}^mP(m)q^{im}\binom{n-kc-1}{m-1}_q \\&\quad + \sum_{m=0}^{n-kc}\zeta_{2c}^m \left(P(m)- P(m+kc)\right)\binom{n-kc}{m}_q \\
&\equiv \zeta_{2c}(q^{n-kc}-1)\sum_{i=0}^{kc-1}q^iQ^{(0)}_c(P(x+1),i,n-kc-1) \\&\quad + \sum_{m=0}^{n-kc}\zeta_{2c}^m \left(P(m)- P(m+kc)\right)\binom{n-kc}{m}_q \imod{\Phi_{kc}(q)^{d+1}}. \\
\end{split}\end{equation*}
Note that in the latter sum $\deg(P(x)-P(x+kc))<b$ and $n-kc \geq (\deg(P)+2d+1)kc-kc \geq (\deg(P(x)-P(x+kc))+2d+1)kc$, and hence by induction on $\deg(P)$ this term vanishes modulo $\Phi_{kc}(q)^{d+1}$.  For the former sum, we have $n-kc-1 \geq (\deg(P)+2(d-1)+1)kc$, and hence $\Phi_{kc}(q)^d \mid Q^{(0)}_c(P(x+1),i,n-kc-1)$ by induction on $d$.  Since $kc \mid n$ we have $\Phi_{kc}(q) \mid q^{n-kc}-1$, and hence the first summand is also congruent to $0$ modulo $\Phi_{kc}(q)^{d+1}$.  This resolves the case $n=(\deg(P)+2d+1)kc$.

Now when $n = (\deg(P)+2d+1)kc+1$, Lemma \ref{le:n.in.terms.of.n.minus.one.and.two} tells us that 
$$Q^{(0)}_c(P(x),0,n) = Q^{(0)}_c\left(P(x)+\zeta_{2c}P(x+1),0,n-1\right)-\zeta_{2c}(1-q^{n-1})Q^{(0)}_c(P(x+1),0,n-2).$$
Note that since $n=(\deg(P)+2d+1)kc+1$, the first summand is congruent to $0$ modulo $\Phi_{kc}(q)^{d+1}$ by the previous case.  For the second summand, the factor $1-q^{n-1}$ contains a factor of $\Phi_{kc}(q)$, whereas the term $Q^{(0)}_c(P(x+1),0,n-2)$ is divisible by $\Phi_{kc}(q)^{d}$ by induction on $d$.  Hence this second summand is also congruent to $0$ modulo $\Phi_{kc}(q)^{d+1}$.

Finally, when $n \geq (\deg(P)+2d+1)kc+2$, Lemma \ref{le:n.in.terms.of.n.minus.one.and.two} allows us to use induction on $n$ to verify the desired congruence.

With the base case $z=0$ resolved, we go on to consider $z>0$.  Note that from equation (\ref{eq:z.plus.one.in.terms.of.z}) we have 
$$Q^{(0)}_c(P(x),z+1,n)= Q^{(0)}_c(P(x),z,n+1)-\zeta_{2c}q^zQ^{(0)}_c(P(x+1),z,n).$$
By induction on $z$, the result holds.
\end{proof}

\section{Higher derivatives}\label{sec:yes.derivative}
In this section, we again use $c$ to denote a positive integer and $k$ an odd natural number.  In the same way, variables $l, d, z$, and $n$ are reserved for natural numbers, and $\zeta_{2c}$ is a fixed, primitive $2c$-th root of unity. 

To finish the proof of Theorem \ref{th:main.theorem}, we need to account for the cases where $l>0$.  Since the case $l=0$ was handled in the previous section, we use induction on $l$.  Hence we may assume we know that $Q^{(v)}_c(P(x),z,n) \equiv 0 \imod{\Phi_{kc} (q) ^{d+1}}$ whenever $v<l$ and $n \geq (\deg(P)+2(v+d)+1)kc$, and we would like to consider $Q^{(l)}_c(P(x),z,n)$ when $n \geq (\deg(P)+2(l+d)+1)kc$.

We prove the result by induction on $z$.  Notice that $$Q^{(l)}_c(P(x),0,n) = \sum_{m=0}^n \zeta_{2c}^m P(m) \frac{d^l}{dq^l}\left[\binom{n}{m}_q\right] = \frac{d^l}{dq^l}\left[Q^{(0)}_c(P(x),0,n)\right].$$  By Theorem \ref{th:big.theorem.when.z.zero} we know that $\Phi_{kc}(q)^{d+l+1} \mid Q^{(0)}_c(P(x),0,n)$, and hence $Q^{(l)}_c(P(x),0,n)$ is divisible by $\Phi_{kc}(q)^{d+1}$ as desired. So assume that $Q^{(l)}_c(P(x),z,n) \equiv 0 \imod{\Phi_{kc}(q)^{d+1}}$ for all $n \geq (\deg(P)+2(l+d)+1)kc$, and consider $Q^{(l)}_c(P(x),z+1,n)$. 

Before continuing, we notice that the product rule allows us to express
\begin{equation*}\begin{split}q^x\frac{d^l}{dq^l}\left[\binom{n}{x}_q\right] &= \frac{d^l}{dq^l}\left[q^x\binom{n}{x}_q\right] - \sum_{j=1}^{l} \binom{l}{j} \frac{d^{j}}{dq^{j}}\left[q^x\right]\frac{d^{l-j}}{dq^{l-j}}\left[\binom{n}{x}_q\right]\\
&=\frac{d^l}{dq^l}\left[q^x\binom{n}{x}_q\right] - \sum_{j=1}^{l} R_j(x) q^{x-j} \frac{d^{l-j}}{dq^{l-j}}\left[\binom{n}{x}_q\right]
\end{split}\end{equation*} where $R_{j}(x) = \binom{l}{j}\prod_{k=0}^{j-1}(x-k) \in \Z[x]$.  Hence we have
\begin{equation}\label{eq:deriv.relation}\begin{split}
Q^{(l)}_c(P(x),&z+1,n) = \sum_{m=0}^n \zeta_{2c}^m P(m) q^{zm} q^m \frac{d^l}{dq^l}\left[\binom{n}{m}_q\right] \\
&=\sum_{m=0}^n \zeta_{2c}^m P(m) q^{zm} \left(\frac{d^l}{dq^l}\left[q^m\binom{n}{m}_q\right] - \sum_{j=1}^{l} R_j(m)q^{m-j} \frac{d^{l-j}}{dq^{l-j}}\left[\binom{n}{m}_q\right]\right) \\
&=\sum_{m=0}^n \zeta_{2c}^m P(m) q^{zm} \left(\frac{d^l}{dq^l}\left[\binom{n+1}{m}_q - \binom{n}{m-1}_q\right]\right) \\& \quad - \sum_{j=1}^{l} q^{-j} \sum_{m=0}^n \zeta_{2c}^m P(m)R_j(m) q^{(z+1)m} \frac{d^{l-j}}{dq^{l-j}}\left[\binom{n}{m}_q\right]\\
&=Q^{(l)}_c(P(x),z,n+1) - \zeta_{2c}q^z Q^{(l)}_c(P(x+1),z,n) \\&\quad - \sum_{j=1}^{l} q^{-j}Q^{(l-j)}_c(P(x)R_j(x),z+1,n).
\end{split}\end{equation}

We pause briefly to make sense of the terms $q^{-j} Q^{(l-j)}_c(P(x)R_j(x),z+1,n)$, where $1 \leq j \leq l$.  Notice that $R_j(x) = 0$ for $x \in \Z$ with $0 \leq x \leq j-1$, and hence each of the summands in $Q^{(l-j)}_c(P(x)R_j(x),z+1,n)$ is divisible by $q^j$.  Hence $q^j \mid Q^{(l-j)}_c(P(x)R_j(x),z+1,n)$, and so $q^{-j} Q^{(j)}_c(P(x)R_j(x),z+1,n) \in \Z[\zeta_{2c}][q]$.  Moreover, since $\deg(P(x)R_j(x)) = \deg(P)+j$, and because $n \geq (\deg(P(x)R_j(x))+2(l-j+d)+1)kc$, by induction on $l$ we know that $$Q^{(l-j)}_c(P(x)R_j(x),z+1,n) \equiv 0 \imod{\Phi_{kc}(q)^{d+1}}.$$  Because $(q,\Phi_{kc}(q)) = 1$, it also follows that 
\begin{equation*}\begin{split}
Q^{(l)}_c(P(x),z+1,n) & \equiv Q^{(l)}_c(P(x),z,n+1) - \zeta_{2c}q^z Q^{(l)}_c(P(x+1),z,n) \imod{\Phi_{kc}(q)^{d+1}}.
\end{split}\end{equation*}
By induction on $z$, both of these terms are divisible by $\Phi_{kc}(q)^{d+1}$. This proves the theorem.

\section{Fleck-type sums}\label{sec:fleck}  
In this section, we again use $c$ to denote a positive integer and $k$ an odd natural number.  In the same way, variables $l, d, z$, and $n$ are reserved for natural numbers, and $\zeta_{2c}$ stands for a fixed, primitive $2c$-th root of unity. We devote this section to proving Theorem \ref{th:fleck.at.k} and consider how close it comes to providing a generalization of Fleck's congruence. We will see that Theorem \ref{th:fleck.at.k} provides ``half" of a generalization of Fleck's congruence, and that we can recover a result analogous to Fleck's congruence when $p=2$ only by forcing the sum under consideration to be alternating.

\subsection{The $q$-analog of Fleck's congruence}

\begin{proof}[Proof of Theorem \ref{th:fleck.at.k}]
Recall that we're assuming that $n \geq (\deg(P)+2(l+d)+1)kc$, and our goal is to prove that for any $0 \leq j \leq c-1$ we have
$$\sum_{m \equiv j \imod{c}} (-1)^{\frac{m-j}{c}} P(m)(q^z)^m \frac{d^l}{dq^l}\left[\binom{n}{m}_q\right] \equiv 0 \imod{\Phi_{kc}(q)^{d+1}}.$$

For $0 \leq h \leq c-1$, consider
$$G_h = \sum_{m=0}^n (\zeta_{2c}^{1+2h})^m P(m)(q^z)^m \frac{d^l}{dq^l}\left[\binom{n}{m}_q\right].$$
Notice that $G_0 = Q^{(l)}_c(P(x),z,n)$, and so Theorem \ref{th:main.theorem} gives $G_0 \equiv 0 \imod{\Phi_{kc}(q)^{d+1}}$.  Similarly if $(1+2h)r = c$, then we have $G_h = Q^{(l)}_r(P(x),z,n)$, and since $kc = k(1+2h)r$ is an odd multiple of $r$, Theorem \ref{th:main.theorem} again gives $G_h \equiv 0 \imod{\Phi_{kc}(q)^{d+1}}$.  Now suppose we have $(1+2h,c) = s$, so that $\frac{c}{s} =r$, but that $1+2h \nmid c$.  Then the term $\zeta_{2c}^{1+2h}$ in the expression for $G_h$ is still a primitive $2r$-th root of unity.  Since the proof of Theorem \ref{th:main.theorem} doesn't depend on the choice of a primitive $2r$-th root of unity, we can still apply Theorem 1.1.  Now since $s$ is an odd number, observe that $kc = ksr$ is an odd multiple of $r$, and so we conclude that for all $0 \leq h \leq c-1$ we have $G_h \equiv 0 \imod{\Phi_{kc}(q)^{d+1}}$.

Notice, however, that 
\begin{equation*}\begin{split}
\sum_{h=0}^{c-1} \zeta_{c}^{-jh} G_h &= \sum_{m=0}^n \zeta_{2c}^mP(m)(q^z)^m \frac{d^l}{dq^l}\left[\binom{n}{m}_q\right] \sum_{h=0}^{c-1} (\zeta_{c}^{m-j})^h \\
&= c \cdot \zeta_{2c}^j \sum_{m\equiv j \imod{c}} \zeta_{2c}^{m-j} P(m)(q^z)^m\frac{d^l}{dq^l}\left[\binom{n}{m}_q\right] \\
&= c  \cdot \zeta_{2c}^j \sum_{m\equiv j \imod{c}} (-1)^{\frac{m-j}{c}} P(m)(q^z)^m\frac{d^l}{dq^l}\left[\binom{n}{m}_q\right].
\end{split}\end{equation*}
Since $(c,\Phi_{kc}(q))=1$ and $\zeta_{2c}$ is a unit, we have the desired divisibility.
\end{proof}

In the following proposition, $\lfloor x \rceil$ is defined to be the nonnegative integer closest to $x \in \R$, where $\lfloor x \rceil=0$ for all $x < 1/2$ and $\lfloor n + \frac{1}{2} \rceil = n+1$ for $n \in \Z_{\geq 0}$. 

\begin{proposition}\label{prop:fleck.restated}
If $n \in \Z_{\geq 0}$, $0 \leq j < c$, and $P \in \Z[x]$, then
$$\sum_{m \equiv j \imod{c}}(-1)^{\frac{m-j}{c}} P(m)(q^z)^m \frac{d^l}{dq^l}\left[\binom{n}{m}_q\right] \equiv 0 \imod{\mathop{\prod_{k \mbox{\tiny{ odd}}}}\Phi_{kc}(q)^{\varepsilon(kc,l,P(x),n)}},$$ where $\varepsilon(kc,l,P(x),n) = \left\lfloor \frac{n}{2kc}-\frac{\deg(P)}{2}-l\right\rceil$.
\end{proposition}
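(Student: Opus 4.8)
The plan is to read the exponents off of Theorem \ref{th:fleck.at.k} one cyclotomic factor at a time and then glue them together using coprimality. Fix $0 \leq j < c$; if $P = 0$ the sum vanishes and there is nothing to prove, so assume $P \neq 0$. For an odd $k \in \Z_{>0}$ set
$$x_k = \frac{n}{2kc} - \frac{\deg(P)}{2} - l,$$
so that $\varepsilon(kc,l,P(x),n) = \lfloor x_k \rceil$ by definition.

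First I would dispose of the ``tail'' of the product. By the stated rounding convention, $\lfloor x_k \rceil = 0$ whenever $x_k < \tfrac12$, in which case $\Phi_{kc}(q)^{\varepsilon(kc,l,P(x),n)} = 1$ and there is nothing to check; moreover $x_k \geq \tfrac12$ is equivalent to $k \leq \frac{n}{c(\deg(P)+2l+1)}$, so only finitely many odd $k$ contribute a nontrivial factor and the product $\prod_{k\text{ odd}}\Phi_{kc}(q)^{\varepsilon(kc,l,P(x),n)}$ is genuinely finite. For each of the finitely many odd $k$ with $x_k \geq \tfrac12$, put $d_k = \lfloor x_k \rceil - 1$; since the nearest nonnegative integer to $x_k \geq \tfrac12$ is at least $1$, we have $d_k \geq 0$, and with the paper's convention $\lfloor x_k \rceil = \lfloor x_k + \tfrac12 \rfloor$ on $x_k \geq \tfrac12$, so $d_k = \lfloor x_k - \tfrac12 \rfloor \leq x_k - \tfrac12$. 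Multiplying this last inequality by $2kc$ and unwinding the definition of $x_k$ shows it is exactly $n \geq (\deg(P) + 2(l + d_k) + 1)kc$, the hypothesis of Theorem \ref{th:fleck.at.k}. That theorem then gives
$$\sum_{m \equiv j \imod{c}}(-1)^{\frac{m-j}{c}} P(m)(q^z)^m \frac{d^l}{dq^l}\left[\binom{n}{m}_q\right] \equiv 0 \imod{\Phi_{kc}(q)^{d_k + 1}},$$
and $d_k + 1 = \lfloor x_k \rceil = \varepsilon(kc,l,P(x),n)$.

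It remains to assemble these individual divisibilities. As $k$ runs over the odd positive integers the indices $kc$ are pairwise distinct, hence the $\Phi_{kc}(q)$ are pairwise coprime (distinct monic irreducibles over $\Q$), and so are their powers $\Phi_{kc}(q)^{\varepsilon(kc,l,P(x),n)}$; since the moduli are monic this coprimality is equally valid in $\Z[q]$. A polynomial divisible by each member of a finite pairwise coprime family is divisible by their product, so the Fleck-type sum is divisible by $\prod_{k\text{ odd}}\Phi_{kc}(q)^{\varepsilon(kc,l,P(x),n)}$, which is the claim.

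The only delicate point is the bookkeeping identity $\lfloor x_k \rceil - 1 = \lfloor x_k - \tfrac12 \rfloor$ and its translation into the inequality hypothesis of Theorem \ref{th:fleck.at.k}; in particular one should check the boundary value $x_k = \tfrac12$, where $d_k = 0$ is forced and $x_k = \tfrac12$ unwinds to $n = (\deg(P) + 2l + 1)kc$, so the hypothesis $n \geq (\deg(P) + 2(l+0)+1)kc$ holds with equality. Everything else is an immediate invocation of Theorem \ref{th:fleck.at.k} together with the elementary fact about products of pairwise coprime divisors, so I do not expect any further obstacle.
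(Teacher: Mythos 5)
Your proof is correct and follows essentially the same route as the paper: apply Theorem \ref{th:fleck.at.k} for each odd $k$ with $d_k = \varepsilon(kc,l,P(x),n)-1$ (the paper instead fixes $d_k$ via the two-sided inequality $(\deg(P)+2(l+d_k+1)+1)kc > n \geq (\deg(P)+2(l+d_k)+1)kc$ and deduces $d_k+1=\varepsilon$, which is the same bookkeeping run in the opposite direction), then combine the finitely many nontrivial congruences using pairwise coprimality of the cyclotomic polynomials. The boundary check at $x_k=\tfrac12$ and the remark that the product is finite are both handled correctly.
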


\begin{proof}
For an odd $k\in \Z_{\geq 0}$ given, if $d_k\in \Z_{\geq 0}$ can be chosen so that $$(\deg(P)+2(l+d_k+1)+1)kc > n \geq (\deg(P)+2(l+d_k)+1)kc,$$  then dividing this by $2kc$ and rearranging gives 
$$d_k+1.5 > \frac{n}{2kc} -\frac{\deg(P)}{2} - l \geq d_k+0.5.$$ Applying the rounding function $\left\lfloor \cdot \right\rceil$ preserves the inequalities (since the term $d_k+1.5$ is certainly rounded up), and so one finds that $d_k+1 = \left\lfloor \frac{n}{2kc}-\frac{\deg(P)}{2}-l\right\rceil = \varepsilon(kc,l,P(x),n)$.  Theorem \ref{th:fleck.at.k} then gives $$\sum_{m \equiv j \imod{c}}(-1)^{\frac{m-j}{c}}P(m)(q^z)^m\frac{d^l}{dq^l}\left[\binom{n}{m}_q\right] \equiv 0 \imod{\Phi_{kc}(q)^{\varepsilon(kc,l,P(x),n)}}.$$  Otherwise, $\frac{n}{2kc}-\frac{\deg(P)}{2}-l < 1/2$, hence $\varepsilon(kc,l,P(x),n) = 0$, and it is still the case that $$\sum_{m \equiv j \imod{c}}(-1)^{\frac{m-j}{c}}P(m)(q^z)^m\frac{d^l}{dq^l}\left[\binom{n}{m}_q\right] \equiv 0 \imod{\Phi_{kc}(q)^{\varepsilon(kc,l,P(x),n)}}.$$ Since $k$ was arbitrary, we can assemble these various congruences together with the Chinese Remainder Theorem because cyclotomic polynomials are pairwise relatively prime.
\end{proof}

\subsection{Recovering a binomial congruence when $c=p^\alpha$}

When we substitute $q=1$ and $l=0$ into Proposition \ref{prop:fleck.restated}, we recover a congruence result for binomial coefficients.  Note that $$\Phi_{kc}(1) = \left\{\begin{array}{ll}p, &\mbox{ if }kc=p^j \mbox{ for some prime }p\\1, &\mbox{ if }kc \mbox{ is not a power of a prime}.\end{array}\right.$$  Of course $kc$ is a power of a prime only when $k=1$ and $c$ is a power of $2$, or when $c$ and $k$ are both powers of an odd prime $p$.  Taking $P(x)=1$ and $z=0$, our result therefore shows 

\begin{corollary}\label{cor:binomial.specialization}
If $p$ is an odd prime and $\alpha \geq 1$, then
$$\sum_{m \equiv j \imod{p^\alpha}} (-1)^{\frac{m-j}{p^\alpha}} \binom{n}{m} \equiv 0 \imod{p^f},$$ where $f = \left\lfloor\frac{n}{2p^\alpha}\right\rceil + \left\lfloor \frac{n}{2p^{\alpha+1}} \right\rceil+\cdots$.
Similarly,
$$\sum_{m \equiv j \imod{2^\alpha}} (-1)^{\frac{m-j}{2^\alpha}} \binom{n}{m} \equiv 0 \imod{2^{\left\lfloor \frac{n}{2^{\alpha+1}}\right\rceil}}.$$
\end{corollary}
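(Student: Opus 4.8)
The plan is to specialize Proposition~\ref{prop:fleck.restated} by substituting $q = 1$, $l = 0$, $z = 0$, and $P(x) = 1$, and then to translate the resulting product of cyclotomic values into an explicit prime power. With these substitutions the left-hand side of Proposition~\ref{prop:fleck.restated} becomes exactly $\sum_{m \equiv j \imod{c}} (-1)^{(m-j)/c} \binom{n}{m}$, and the modulus becomes $\prod_{k \text{ odd}} \Phi_{kc}(1)^{\varepsilon(kc,0,1,n)}$, where $\varepsilon(kc,0,1,n) = \left\lfloor \frac{n}{2kc} \right\rceil$. So the only work left is to evaluate this product, which hinges on the stated fact that $\Phi_m(1) = p$ when $m$ is a power of the prime $p$ and $\Phi_m(1) = 1$ otherwise.

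First I would treat the odd-prime case: set $c = p^\alpha$ with $p$ an odd prime. As $k$ ranges over odd positive integers, $kc = k p^\alpha$ is a prime power precisely when $k$ is itself a power of $p$, i.e.\ $k \in \{1, p, p^2, \dots\}$ (these are all odd since $p$ is odd); for every other odd $k$, the factor $\Phi_{kc}(1) = 1$ contributes nothing. Thus the modulus collapses to $\prod_{i \geq 0} p^{\varepsilon(p^{\alpha+i},0,1,n)} = p^{\sum_{i \geq 0} \lfloor n/(2p^{\alpha+i}) \rceil}$, which is exactly $p^f$ with $f = \lfloor \frac{n}{2p^\alpha} \rceil + \lfloor \frac{n}{2p^{\alpha+1}} \rceil + \cdots$ as claimed. (The sum is finite since the terms vanish once $2p^{\alpha+i} > n$, or more precisely once $p^{\alpha+i} > n$.) Second, for the $p = 2$ case, set $c = 2^\alpha$; now $kc = k \cdot 2^\alpha$ with $k$ odd is a power of $2$ only when $k = 1$, so the entire product reduces to the single factor $\Phi_{2^\alpha}(1)^{\varepsilon(2^\alpha,0,1,n)} = 2^{\lfloor n/2^{\alpha+1} \rceil}$, giving the second congruence.

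I expect no genuine obstacle here — this is a bookkeeping argument. The one point requiring a moment's care is making sure the indexing in the odd-prime sum is right: one should note that the odd $k$ that are powers of $p$ are exactly $p^0, p^1, p^2, \dots$, so that $kc$ runs through $p^\alpha, p^{\alpha+1}, p^{\alpha+2}, \dots$ as $k$ runs through these, matching the exponents $\lfloor n/(2p^\alpha) \rceil, \lfloor n/(2p^{\alpha+1}) \rceil, \dots$ in the definition of $f$. The other thing worth a sentence is confirming that the hypothesis $n \in \Z_{\geq 0}$ of Proposition~\ref{prop:fleck.restated} (rather than a lower bound on $n$) is what lets us state the corollary with no side condition on $n$; Proposition~\ref{prop:fleck.restated} already absorbed the lower-bound hypothesis of Theorem~\ref{th:fleck.at.k} into the rounding function. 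With these remarks in place the corollary follows immediately.
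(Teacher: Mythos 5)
Your proposal is correct and follows exactly the paper's own route: specialize Proposition \ref{prop:fleck.restated} at $q=1$, $l=z=0$, $P(x)=1$, and use the fact that $\Phi_m(1)$ is $p$ when $m$ is a power of the prime $p$ and $1$ otherwise, noting that $kp^\alpha$ ($k$ odd) is a prime power precisely for $k \in \{1,p,p^2,\dots\}$ when $p$ is odd and only for $k=1$ when $p=2$. Nothing further is needed.
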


When $p$ is an odd prime and $\alpha=1$, Corollary \ref{cor:binomial.specialization} has the same flavor as Fleck's original congruence; indeed, the sum under consideration will only differ from the sum in Fleck's congruence by at most a sign (since both are alternating sums of the same quantities).  Unfortunately, the previous result falls short of the number of factors of $p$ given in Fleck's congruence: $\left\lfloor \frac{n-1}{p-1}\right\rfloor$.  Indeed, in the long run Corollary \ref{cor:binomial.specialization} accounts for only half of the factors of $p$ in Fleck's original congruence.  This is shown in the following

\begin{proposition}
$$\lim_{n \to \infty} \frac{\left\lfloor \frac{n}{2p}\right\rceil + \left\lfloor \frac{n}{2p^2}\right\rceil + \cdots}{\left\lfloor \frac{n-1}{p-1}\right\rfloor} = \frac{1}{2}.$$
\end{proposition}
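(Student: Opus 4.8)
The plan is to estimate the numerator and the denominator of the displayed fraction separately, each up to an additive error that is $o(n)$, and then let $n \to \infty$. The denominator is immediate: $\left\lfloor \frac{n-1}{p-1}\right\rfloor = \frac{n}{p-1} + O(1)$, so it grows linearly in $n$ with slope $\frac{1}{p-1}$.

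For the numerator, the key observation is that $\left\lfloor \frac{n}{2p^i}\right\rceil = 0$ as soon as $\frac{n}{2p^i} < \frac12$, i.e. as soon as $p^i > n$. Hence, writing $I = I(n)$ for the largest integer with $p^I \le n$, the ``infinite'' sum in the numerator is really the finite sum $\sum_{i=1}^{I}\left\lfloor \frac{n}{2p^i}\right\rceil$, which has $I \le \log_p n$ terms. I would then apply the bound $\left|\lfloor x\rceil - x\right| \le \frac12$, valid for every real $x$, to each of these terms, giving
$$\sum_{i=1}^{I}\left\lfloor \frac{n}{2p^i}\right\rceil = \sum_{i=1}^{I}\frac{n}{2p^i} + E, \qquad |E| \le \frac{I}{2} \le \frac{\log_p n}{2}.$$
The remaining geometric sum equals $\frac{n}{2(p-1)} - \frac{n}{2(p-1)p^{I}}$, and since $p^I \le n < p^{I+1}$ forces $\frac{n}{p^I} < p$, the subtracted term is $O(1)$; altogether the numerator is $\frac{n}{2(p-1)} + O(\log n)$.

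Finally I would divide: the fraction equals
$$\frac{\frac{n}{2(p-1)} + O(\log n)}{\frac{n}{p-1} + O(1)},$$
and dividing top and bottom by $n$ and letting $n \to \infty$ gives $\frac12$. There is no genuine obstacle in this argument; the only point that deserves a moment's care is the verification that the number of nonzero summands in the numerator grows like $\log n$, so that the accumulated rounding error is $o(n)$ — but this is exactly the observation that the terms with $p^i > n$ vanish.
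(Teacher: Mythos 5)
Your proof is correct and follows essentially the same route as the paper: truncate the sum at the last index with $p^i \le n$, replace each rounded term by its argument with an $O(1)$ (in fact $\le \tfrac12$) error accumulating to $O(\log n)$, sum the geometric series to get $\frac{n}{2(p-1)}+O(1)$, and compare with $\left\lfloor \frac{n-1}{p-1}\right\rfloor = \frac{n}{p-1}+O(1)$. The only cosmetic caveat is that $\left|\lfloor x\rceil - x\right|\le \tfrac12$ holds under the paper's definition only for $x\ge 0$, which is all that is needed here.
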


\begin{proof}
Let $L = \lfloor \log_p(n) \rfloor$.  Now $\lfloor n/2p^k \rceil = 0$ for $k>L$, so that 
$$\sum_{k=1}^{\infty} \left\lfloor \frac{n}{2p^k}\right\rceil  
= \sum_{k=1}^{L} \left(\frac{n}{2p^k}+O(1)\right) 
= \frac{n}{2p}\sum_{k=0}^{L-1}\frac{1}{p^k} + O(\log_p(n)) 
= \frac{n}{2(p-1)} + O(\log_p(n)).$$
Of course it is obvious that $\lfloor \frac{n-1}{p-1} \rfloor = \frac{n}{p-1} + O(1)$. The conclusion follows immediately.
\end{proof}

In \cite[Lem.~10]{We}, Weisman proved a variant of Fleck's congruence:
$$\sum_{m\equiv j \imod{p^\alpha}} (-1)^m \binom{n}{m} \equiv 0 \imod{p^{\left\lfloor \frac{n}{\phi(p^\alpha)}\right\rfloor-1}}.$$  This result is slightly weaker than Fleck's result when $\alpha=1$.  Weisman appears to have been unaware of Fleck's original work, but instead considered these sums in order to provide a counterexample to a question of Mahler concerning $p$-adic functions with continuous derivatives.  Though he gives credit to Weisman, it appears that Sun (\cite[Th.~1.1]{Su2}) was the first to prove the following generalization of Fleck's congruence: $$\sum_{m\equiv j \imod{p^\alpha}} (-1)^m \binom{n}{m} \equiv 0 \imod{p^{\left\lfloor \frac{n-p^{\alpha-1}}{\phi(p^\alpha)}\right\rfloor}}.$$ (In fact, Sun's result is far more general than what we state; his work also provides a generalization of another binomial congruence established by Wan in \cite{Wa} that arose from the study of $(\varphi,\Gamma)$-modules from Iwasawa theory.)  Corollary \ref{cor:binomial.specialization} provides divisibility results for Sun's sum when $p$ is odd, since in this case both sums are alternating.  Again, unfortunately, the corollary falls short of the known number of factors of $p$.  The proof that we gave in the previous proposition can be adapted to show that Corollary \ref{cor:binomial.specialization} again accounts for half of the factors of $p$ from Sun's theorem.

\subsection{Forced alternation when $p=2$}

In considering the specialization of Theorem \ref{th:fleck.at.k} to binomial coefficients, it's worth pointing out the sums considered in Fleck's congruence and Sun's extension are not alternating sums when $p =2$, $\alpha \in \Z_{>0}$: the sum is taken over those elements $m \equiv j \imod{2^\alpha}$ and the sign in the sum is contributed by $(-1)^m = (-1)^j$.  To capture alternating sums over congruence classes modulo $2^\alpha$, one needs to consider sums like the one found in Corollary \ref{cor:binomial.specialization}.  When one does so, one can use Sun's result as the key ingredient in the proof of the following result.  

\begin{proposition}\label{prop:forced.alternating.even.fleck}
For $\alpha$ a positive integer and $0 \leq j<2^\alpha$, we have $$\sum_{m \equiv j \imod{2^\alpha}} (-1)^{\frac{m-j}{2^\alpha}} \binom{n}{m} \equiv 0 \imod{2^{\left\lfloor \frac{n}{2^\alpha}\right\rfloor}}.$$
\end{proposition}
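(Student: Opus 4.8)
The plan is to bootstrap from Sun's congruence $\sum_{m\equiv j \imod{2^\alpha}} (-1)^m \binom{n}{m} \equiv 0 \imod{2^{\lfloor (n-2^{\alpha-1})/\phi(2^\alpha)\rfloor}}$ by relating the alternating-over-residues sum to a telescoping (in $\alpha$) combination of Sun-type sums at higher powers of $2$. The key observation is that an element $m \equiv j \imod{2^\alpha}$ satisfies exactly one of $m \equiv j \imod{2^{\alpha+1}}$ or $m \equiv j+2^\alpha \imod{2^{\alpha+1}}$, and in the first case $(-1)^{(m-j)/2^\alpha}$ is $+1$ precisely when $(m-j)/2^{\alpha+1}$ is an integer, while in the second case the parity flips. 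Concretely, I would verify the identity
\[
\sum_{m \equiv j \imod{2^\alpha}} (-1)^{\frac{m-j}{2^\alpha}} \binom{n}{m}
= \sum_{m \equiv j \imod{2^{\alpha+1}}} \binom{n}{m} - \sum_{m \equiv j+2^\alpha \imod{2^{\alpha+1}}} \binom{n}{m},
\]
which rewrites the alternating sum as a difference of two (non-alternating, since $2\mid 2^{\alpha+1}$) complete sums over residue classes mod $2^{\alpha+1}$.

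Next I would bring signs into play so Sun's theorem applies directly. Multiplying through by a fixed sign depending only on $j$ (or absorbing $(-1)^m = (-1)^j$ on the first sum and $(-1)^m = (-1)^{j+2^\alpha} = (-1)^j$ on the second, noting $2^\alpha$ is even for $\alpha \geq 1$), the right-hand side is $(-1)^j$ times
\[
\sum_{m\equiv j \imod{2^{\alpha+1}}} (-1)^m\binom{n}{m} - \sum_{m\equiv j+2^\alpha \imod{2^{\alpha+1}}} (-1)^m\binom{n}{m}.
\]
Sun's congruence gives that each of these two sums is divisible by $2^{\lfloor (n - 2^\alpha)/\phi(2^{\alpha+1})\rfloor} = 2^{\lfloor (n-2^\alpha)/2^\alpha \rfloor} = 2^{\lfloor n/2^\alpha\rfloor - 1}$, so their difference is divisible by $2^{\lfloor n/2^\alpha\rfloor - 1}$. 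That is one factor of $2$ short of the claim, so the difference must be handled more carefully: rather than estimating each summand separately, I would iterate the splitting identity, expressing the original sum as an alternating combination of complete residue-class sums modulo $2^{\alpha+1}, 2^{\alpha+2}, \dots$ — i.e., repeatedly apply the telescoping to the surviving "aligned" pieces — until the modulus exceeds $n$ and the sums become trivial, then reassemble. Alternatively, and more cleanly, I would pair the two residue classes mod $2^{\alpha+1}$ and use that their sum $\sum_{m \equiv j \imod{2^\alpha}} \binom{n}{m}$ and difference together control the $2$-adic valuation better than the crude bound; Sun's theorem applied at level $\alpha$ (giving divisibility of the plain sum by a comparable power) combined with the level-$(\alpha+1)$ estimate of the difference should upgrade the exponent by one.

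The main obstacle I anticipate is squeezing out that final factor of $2$: the naive term-by-term application of Sun's bound to the difference loses one, and recovering it requires either an inductive telescoping argument on $\alpha$ (with the $\alpha \to \alpha+1$ step feeding the higher-power Sun estimate back in) or a direct $2$-adic valuation argument showing the two residue-class sums modulo $2^{\alpha+1}$ agree modulo the extra power of $2$. I expect the induction on $\alpha$ to close cleanly: the base case $\alpha$ large (so $2^\alpha > n$, sum has a single term $\binom{n}{j}$ and $\lfloor n/2^\alpha\rfloor = 0$) is trivial, and the inductive step uses the displayed splitting identity together with the observation that the two sums on the right are, up to the sign $(-1)^j$, alternating sums over residues mod $2^{\alpha+1}$ to which Sun's result (or the inductive hypothesis suitably phrased for the "plain" sums) applies with the correct exponent $\lfloor n/2^{\alpha+1}\rfloor$ — and $\lfloor n/2^\alpha\rfloor \leq \lfloor n/2^{\alpha+1}\rfloor + \lfloor n/2^{\alpha+1}\rfloor$ is false in general, so the bookkeeping of exponents across the telescope is exactly where care is needed.
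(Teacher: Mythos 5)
Your opening identity is exactly the right starting point, and you have correctly diagnosed the obstruction: applying Sun's congruence at level $\alpha+1$ to each of the two residue-class sums separately yields only $2^{\lfloor n/2^\alpha\rfloor - 1}$, one factor of $2$ short. But the proposal stops there. Neither of the two strategies you sketch is carried out, and the one you lean on --- telescoping over $\alpha$ --- you yourself observe cannot close, since the exponent bookkeeping $\lfloor n/2^\alpha\rfloor \le 2\lfloor n/2^{\alpha+1}\rfloor$ fails in general. What you call ``a direct $2$-adic valuation argument showing the two residue-class sums modulo $2^{\alpha+1}$ agree modulo the extra power of $2$'' is precisely the missing content, and it is not supplied.

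The idea that actually recovers the lost factor of $2$ is to shift $n$, not just the residue class. Sun's congruence applied to $n+2^\alpha$ at level $\alpha+1$ gives
$$\sum_{m \equiv j \imod{2^{\alpha+1}}} \binom{n+2^\alpha}{m+2^\alpha} \equiv 0 \imod{2^{\left\lfloor \frac{n}{2^\alpha}\right\rfloor}},$$
with the full exponent because $\left\lfloor \frac{(n+2^\alpha)-2^\alpha}{2^\alpha}\right\rfloor = \left\lfloor \frac{n}{2^\alpha}\right\rfloor$. Expanding $\binom{n+2^\alpha}{m+2^\alpha}$ by Chu--Vandermonde with $t=2^\alpha$ and discarding the middle terms (each is $\binom{2^\alpha}{k}$ with $2 \mid \binom{2^\alpha}{k}$ times a residue-class sum divisible by $2^{\lfloor n/2^\alpha\rfloor-1}$) shows that
$$\sum_{m \equiv j \imod{2^{\alpha+1}}} \binom{n}{m} + \sum_{m \equiv j \imod{2^{\alpha+1}}} \binom{n}{m+2^\alpha} \equiv 0 \imod{2^{\left\lfloor \frac{n}{2^\alpha}\right\rfloor}}.$$
Meanwhile, the difference of the two class sums in your splitting identity is congruent to their sum modulo $2^{\lfloor n/2^\alpha\rfloor}$, because twice either one is divisible by $2^{\lfloor n/2^\alpha\rfloor}$ by Sun at level $\alpha+1$. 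Combining the two displays finishes the proof. Without this step --- some mechanism for comparing the two residue-class sums to each other rather than estimating each against zero --- the proposal establishes only the weaker exponent $\lfloor n/2^\alpha\rfloor - 1$.
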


\begin{proof}
Begin by noticing that
$$\sum_{m \equiv j \imod{2^\alpha}} (-1)^{\frac{m-j}{2^\alpha}} \binom{n}{m} = \sum_{m \equiv j \imod{2^{\alpha+1}}} \binom{n}{m} - \sum_{m \equiv j \imod{2^{\alpha+1}}} \binom{n}{m+2^\alpha}.$$  Now Sun's result tells us that $$2\cdot \sum_{m \equiv j\imod{2^{\alpha+1}}} \binom{n}{m+2^\alpha} \equiv 0 \imod{2^{\left\lfloor\frac{n}{2^\alpha}\right\rfloor}},$$ and by adding this quantity to the previous equation we have the following congruence modulo $2^{\left\lfloor\frac{n}{2^\alpha}\right\rfloor}$:
\begin{equation}\label{eq:binomial.part1}\begin{split}\sum_{m \equiv j \imod{2^\alpha}} (-1)^{\frac{m-j}{2^\alpha}} \binom{n}{m} & \equiv \sum_{m \equiv j \imod{2^{\alpha+1}}} \binom{n}{m} + \sum_{m \equiv j \imod{2^{\alpha+1}}} \binom{n}{m+2^\alpha}.\end{split}\end{equation}  

Sun's result also tells us that \begin{equation}\label{eq:binomial.part2}\sum_{m \equiv j \imod{2^{\alpha+1}}} \binom{n+2^\alpha}{m+2^\alpha} \equiv 0 \imod{2^{\left\lfloor \frac{n}{2^\alpha}\right\rfloor}},\end{equation} though if we apply the Chu-Vandermonde identity with $t=2^\alpha$ we find
\begin{equation*}\begin{split}
\sum_{m \equiv j \imod{2^{\alpha+1}}} \binom{n+2^\alpha}{m+2^\alpha} = \sum_{k=0}^{2^\alpha} \binom{2^\alpha}{k}\sum_{m \equiv j \imod{2^{\alpha+1}}} \binom{n}{m+2^\alpha-k}.
\end{split}\end{equation*} Sun's result again tells us that
$$\sum_{m \equiv j \imod{2^{\alpha+1}}} \binom{n}{m+2^\alpha-k} \equiv 0 \imod{2^{\left\lfloor\frac{n-2^\alpha}{2^\alpha}\right\rfloor}},$$ and since $2 \mid \binom{2^\alpha}{k}$ for each $0 < k < 2^\alpha$ we have a congruence modulo $2^{\left\lfloor \frac{n}{2^\alpha}\right\rfloor}$:
\begin{equation}\label{eq:binomial.part3}\sum_{m \equiv j \imod{2^{\alpha+1}}} \binom{n+2^\alpha}{m+2^\alpha} \equiv \sum_{m \equiv j \imod{2^{\alpha+1}}} \binom{n}{m+2^\alpha} + \sum_{m \equiv j \imod{2^{\alpha+1}}} \binom{n}{m}.\end{equation}
Combining equations (\ref{eq:binomial.part1}),(\ref{eq:binomial.part2}) and (\ref{eq:binomial.part3}) completes the proof.
\end{proof}

There are a few things worth noting.  First, while there are only half as many factors of $2$ in Proposition \ref{prop:forced.alternating.even.fleck} compared to the non-alternating sums considered by Fleck or Sun's congruences, Corollary \ref{cor:binomial.specialization} still only accounts for half of the factors of $2$ from Proposition \ref{prop:forced.alternating.even.fleck}.  Second, the application of the Chu-Vandermonde in the previous proof yields one more factor of $2$ in the sum than those given by simply applying Sun's result to the first equation in the proof. 

Finally, and perhaps most interesting, whereas the $2$-divisibility from Sun's results was the key ingredient in the proof of our Proposition \ref{prop:forced.alternating.even.fleck}, one cannot expect a similar type of connection between alternating and non-alternating sums in the setting of $q$-binomial coefficients.  In particular, it seems that the non-alternating sum of $q$-binomial coefficients across a fixed congruence class modulo $2^\alpha$ will frequently have no factors of the form $\Phi_{2^\beta}$. For instance, one can check that
$$\sum_{m \equiv 1 \imod{4}} (-1)^m\binom{7}{m}_q = -\Phi_7(q) (q^4+q^2+2),$$ whereas
$$\sum_{m \equiv 1 \imod{4}} (-1)^{\frac{m-1}{4}} \binom{7}{m}_q = q^2 \Phi_4(q) \Phi_7(q).$$

\section{Future directions}\label{sec:wrapup}

One potential avenue for future exploration is investigating where the remaining factors of $p$ from Fleck and Sun's results are buried in the $q$-analog.  Certainly one could attempt to work through the various identities in this paper to give explicit formulae for the factors which remain after the ``predictable" cyclotomic factors are accounted for.  A similar question was asked by Sun and Wan in \cite{SW} in the case of binomial coefficients, and there are many satisfactory results in this setting. 

The following result gives a method for accounting for the multiplicity of various predictable cyclotomic factors for a general $n$, much in the same fashion as Proposition \ref{prop:fleck.restated}.  We omit the proof, since it is analogous to that of Proposition \ref{prop:fleck.restated}.

\begin{proposition}\label{prop:main.theorem.restated}
If $n, k \in \Z_{\geq 0}$ with $k$ odd, $c \in \Z_{>0}$, and $P \in \Z[\zeta_{2c}][x]$, then
$$\sum_{m=0}^n \zeta_{2c}^m P(m)(q^z)^m \frac{d^l}{dq^l}\left[\binom{n}{m}_q\right] \equiv 0 \imod{\mathop{\prod_{k \mbox{\tiny{ odd}}}}\Phi_{kc}(q)^{\varepsilon(kc,l,P(x),n)}},$$ where $\varepsilon(kc,l,P(x),n) = \left\lfloor \frac{n}{2kc}-\frac{\deg(P)}{2}-l\right\rceil$ (as defined prior to stating Proposition \ref{prop:fleck.restated}).
\end{proposition}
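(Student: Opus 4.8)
The plan is to run the argument of Proposition~\ref{prop:fleck.restated} essentially verbatim, with Theorem~\ref{th:main.theorem} in place of Theorem~\ref{th:fleck.at.k}. Fix an odd $k \in \Z_{\geq 0}$ and write $S = \frac{n}{2kc} - \frac{\deg(P)}{2} - l$, so that the target exponent attached to this $k$ is $\varepsilon(kc,l,P(x),n) = \lfloor S \rceil$. First I would dispose of the case in which there exists a $d_k \in \Z_{\geq 0}$ with
\[
(\deg(P)+2(l+d_k+1)+1)kc > n \geq (\deg(P)+2(l+d_k)+1)kc .
\]
Dividing through by $2kc$ and rearranging turns this into the chain $d_k + \tfrac32 > S \geq d_k + \tfrac12$. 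Because the rounding convention sends $d_k + \tfrac12$ up to $d_k+1$ (and $d_k+\tfrac32$ up to $d_k+2$), applying $\lfloor\cdot\rceil$ across this chain gives $\lfloor S \rceil = d_k + 1$. Then Theorem~\ref{th:main.theorem} with $d = d_k$ yields $Q^{(l)}_c(P(x),z,n) \equiv 0 \imod{\Phi_{kc}(q)^{d_k+1}}$, which is precisely the asserted congruence for this $k$.

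Next I would handle the complementary case, in which no such $d_k$ exists. The half-open intervals $\bigl[(\deg(P)+2(l+d)+1)kc,\ (\deg(P)+2(l+d+1)+1)kc\bigr)$ for $d = 0,1,2,\dots$ partition $\bigl[(\deg(P)+2l+1)kc,\infty\bigr)$, so the absence of a suitable $d_k$ forces $n < (\deg(P)+2l+1)kc$; a one-line computation then gives $S < \tfrac12$, hence $\varepsilon(kc,l,P(x),n) = \lfloor S \rceil = 0$ and the claimed congruence is vacuous. Combining the two cases, for every odd $k$ we obtain $Q^{(l)}_c(P(x),z,n) \equiv 0 \imod{\Phi_{kc}(q)^{\varepsilon(kc,l,P(x),n)}}$, and---only finitely many of these exponents being nonzero, since $\varepsilon(kc,l,P(x),n) = 0$ as soon as $kc > \frac{n}{\deg(P)+2l+1}$---these assemble, via the pairwise coprimality of distinct cyclotomic polynomials and the Chinese Remainder Theorem, into the single congruence modulo $\prod_{k\text{ odd}}\Phi_{kc}(q)^{\varepsilon(kc,l,P(x),n)}$.

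I expect no genuine obstacle: all of the analytic content sits in Theorem~\ref{th:main.theorem}, and what remains is bookkeeping identical to that in Proposition~\ref{prop:fleck.restated}. If any point warrants care it is purely formal---namely checking that the ``round half up'' convention really does preserve both endpoints of the inequality chain $d_k+\tfrac32 > S \geq d_k+\tfrac12$, and observing that the final assembly step, although carried out in $\Z[\zeta_{2c}][q]$ rather than $\Z[q]$, is unaffected because the polynomials $\Phi_{kc}(q)$ are monic and remain pairwise coprime over $\Q(\zeta_{2c})$, so that divisibility by each implies divisibility by the product.
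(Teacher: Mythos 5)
Your proposal is correct and is exactly the argument the paper intends: the paper omits the proof of Proposition~\ref{prop:main.theorem.restated} precisely because it is the proof of Proposition~\ref{prop:fleck.restated} with Theorem~\ref{th:main.theorem} substituted for Theorem~\ref{th:fleck.at.k}, which is what you carry out. Your added checks on the rounding convention, the vacuous case $\varepsilon = 0$, and coprimality over $\Z[\zeta_{2c}][q]$ are all sound.
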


With the previous result in mind, let us define a family of polynomials, $$R = \{R^{(l)}_c(P(x),z,n) \} \subseteq \Z[\zeta_{2c}][q],$$ by
$$Q^{(l)}_c(P(x),z,n) = R^{(l)}_c(P(x),z,n) \mathop{\prod_{k \mbox{\tiny{ odd}}}}\Phi_{kc}(q)^{\varepsilon(kc,l,P(x),n)},$$ where $\varepsilon(kc,l,P(x),n) = \left\lfloor \frac{n}{2kc}-\frac{\deg(P)}{2}-l\right\rceil$.

In the following result we give a recursive relationship satisfied by the $R$ polynomials.  Though it is stated in a fairly special case, we point out that one could deduce more general recursive relationships either by using the full generality of Lemma \ref{le:n.in.terms.of.n.minus.one.and.two}, or by using Equation (\ref{eq:deriv.relation}), or possibly even by using the Chu-Vandermonde Analog (Lemma \ref{le:critical.identity}). 

\begin{lemma}\label{le:recursion}
When $l=z=0$ and $P(x)=1$, the polynomials $R^{(0)}_c(1,0,n)$ have the following recursive relationship:
$$R^{(0)}_c(1,0,n) \prod_{k \mbox{\tiny{ odd}}} \Phi_{kc}(q)^{\alpha(kc,n)}= (1+\zeta_{2c})R^{(0)}_c(1,0,n-1)\prod_{k \mbox{\tiny{ odd}}} \Phi_{kc}(q)^{\beta(kc, n)}- \zeta_{2c}(1-q^{n-1})R^{(0)}_c(1,0,n-2),$$ where $$\alpha(kc,n) = \left\{\begin{array}{ll}1,&\mbox{ if }$n=ikc$\mbox{ or }$n=ikc+1$ \mbox{ for some odd } $i$\\0, &\mbox{ otherwise}\end{array}\right.$$ and
$$\beta(kc,n) = \left\{\begin{array}{ll}1,&\mbox{ if }$n=ikc+1$ \mbox{ for some odd } $i$\\0, &\mbox{ otherwise}\end{array}\right.$$
\end{lemma}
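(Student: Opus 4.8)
The plan is to substitute the defining factorization of the $R$-family directly into the three-term recursion of Lemma~\ref{le:n.in.terms.of.n.minus.one.and.two} and then bookkeep the powers of the cyclotomic factors. The first step is to specialize Lemma~\ref{le:n.in.terms.of.n.minus.one.and.two} to the case $P(x)=1$; since then $P(x+1)=1$ as well, the two $Q$-polynomials evaluated at $n-1$ coincide and the identity collapses to
\[
Q^{(0)}_c(1,0,n) = (1+\zeta_{2c})\,Q^{(0)}_c(1,0,n-1) - \zeta_{2c}(1-q^{n-1})\,Q^{(0)}_c(1,0,n-2)
\]
for $n\geq 2$ (the same identity that appears in the remark following that lemma and in the proof of Lemma~\ref{le:base.case.for.higher.vanishing}). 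Now substitute $Q^{(0)}_c(1,0,m) = R^{(0)}_c(1,0,m)\prod_{k\text{ odd}}\Phi_{kc}(q)^{\varepsilon(kc,0,1,m)}$ for $m\in\{n-2,n-1,n\}$, which is legitimate by the very definition of the $R$-family (whose existence is guaranteed by Proposition~\ref{prop:main.theorem.restated}); here $\varepsilon(kc,0,1,m)=\lfloor \tfrac{m}{2kc}\rceil$.

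The heart of the argument is then a single cancellation: divide both sides of the resulting identity by $\prod_{k\text{ odd}}\Phi_{kc}(q)^{\varepsilon(kc,0,1,n-2)}$. This is legitimate in the integral domain $\Z[\zeta_{2c}][q]$ because $m\mapsto\varepsilon(kc,0,1,m)=\lfloor\tfrac{m}{2kc}\rceil$ is nondecreasing, so each exponent that remains after cancellation is a nonnegative integer. After dividing, the third summand carries no cyclotomic factor, the first summand carries $\prod_{k\text{ odd}}\Phi_{kc}(q)^{\,\varepsilon(kc,0,1,n-1)-\varepsilon(kc,0,1,n-2)}$, and the left-hand side carries $\prod_{k\text{ odd}}\Phi_{kc}(q)^{\,\varepsilon(kc,0,1,n)-\varepsilon(kc,0,1,n-2)}$. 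Thus it suffices to identify these two exponent differences with $\alpha(kc,n)$ and $\beta(kc,n)$ respectively.

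To do so, I would analyze the one-step increment $\delta(kc,m):=\lfloor\tfrac{m}{2kc}\rceil-\lfloor\tfrac{m-1}{2kc}\rceil$. Since the argument grows by $\tfrac{1}{2kc}\leq\tfrac12$ per step, $\delta(kc,m)\in\{0,1\}$, and by the rounding convention $\lfloor j+\tfrac12\rceil=j+1$ it equals $1$ exactly when $\tfrac{m}{2kc}$ is a positive half-integer, i.e.\ when $m=ikc$ for some odd $i$, and $0$ otherwise. Telescoping, $\beta(kc,n)=\delta(kc,n-1)$, which is $1$ iff $n-1=ikc$ for some odd $i$ --- precisely the stated $\beta$. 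And $\alpha(kc,n)=\delta(kc,n)+\delta(kc,n-1)$; since two consecutive integers cannot both be odd multiples of $kc$ (they differ by $1<2kc$ when $kc\geq2$, and one of any two consecutive integers is even when $kc=1$), at most one of these increments is nonzero, so $\alpha(kc,n)\in\{0,1\}$, equal to $1$ iff $n=ikc$ or $n=ikc+1$ for some odd $i$ --- precisely the stated $\alpha$. Feeding these back in yields the claimed recursion.

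The genuinely substantive point is confined to the last paragraph: keeping the asymmetric half-integer rounding convention straight, and checking that the two possible ``jump events'' (at $n$ and at $n-1$) are mutually exclusive so that the exponent appearing on the left never exceeds $1$. Everything else is a routine substitution into Lemma~\ref{le:n.in.terms.of.n.minus.one.and.two} followed by the cancellation of a common polynomial factor.
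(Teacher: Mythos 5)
Your proof is correct and follows essentially the same route as the paper: divide the $P(x)=1$ specialization of Lemma \ref{le:n.in.terms.of.n.minus.one.and.two} by $\prod_{k \text{ odd}}\Phi_{kc}(q)^{\varepsilon(kc,0,1,n-2)}$ and identify the exponent differences $\varepsilon(kc,0,1,n)-\varepsilon(kc,0,1,n-2)$ and $\varepsilon(kc,0,1,n-1)-\varepsilon(kc,0,1,n-2)$ with $\alpha(kc,n)$ and $\beta(kc,n)$. The only difference is that you spell out the verification of those two rounding identities, which the paper leaves as an observation.
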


\begin{proof}
This result follows by dividing Lemma \ref{le:n.in.terms.of.n.minus.one.and.two} by $\prod_{k \mbox{\tiny{ odd}}} \Phi_{kc}(q)^{\varepsilon(kc,0,1,n-2)}$ and observing that 
$$\left\lfloor \frac{n-1}{2kc} \right\rceil - \left\lfloor \frac{n-2}{2kc} \right\rceil = \beta(kc,n),$$ and that
$$\left\lfloor \frac{n}{2kc} \right\rceil - \left\lfloor \frac{n-2}{2kc} \right\rceil = \alpha(kc,n).$$
\end{proof}

\begin{remark*}
Notice that in the special case where $c=1$, the previous lemma gives 
$$R^{(0)}_1(1,0,n) \prod_{k \mbox{\tiny{ odd}}} \Phi_{k}(q)^{\alpha(k,n)} = (-1) R^{(0)}_1(1,0,n-2) \prod_{k \mid n-1} \Phi_{k}(q).$$ Of course one can easily compute that $R^{(0)}_1(1,0,1)=0$, and hence $R^{(0)}_1(1,0,n) = 0$ whenever $n$ is odd.  When $n$ is even notice that $\alpha(k,n) = 1$ only when $n=ik+1$ for some odd $i$.  But this means that $k \mid n-1$, and hence the additional appearance of $\Phi_k(q)$ on the left side of the equation is balanced by the contribution of a cyclotomic factor from $1-q^{n-1}$.  Since one can easily compute that $R^{(0)}_1(1,0,2)=-1 = (-1)^{2/2}$, we have $$R_1^{(0)}(1,0,n) = \left\{\begin{array}{ll}(-1)^{n/2},&\mbox{ if }n \mbox{ is even}\\0,&\mbox{ if }n \mbox{ is odd}.\end{array}\right.$$  Hence Lemma \ref{le:recursion} gives us another proof that the Gaussian Formula is a consequence of the proof of Theorem \ref{th:main.theorem}.
\end{remark*}

After running computations for $n,p \leq 50$, the authors have found that \begin{equation}\label{eq:q.fleck.sum}\sum_{m\equiv j \imod{p}}(-1)^\frac{m-j}{p}\binom{n}{m}_q\end{equation}  has at most one non-cyclotomic, irreducible factor other than powers of $q$ --- with the exception of $n=8$, $p=3$ and $j=1$.  It seems quite likely that the additional polynomial factors could be computed recursively in a fashion similar to our construction of the polynomials $R$ from Lemma \ref{le:recursion}.  Some preliminary calculations for these polynomials, however, do not provide any insight into why their factorizations include approximately $\frac{n}{2}$ factors of $p$ when $q=1$.  We include just a few computations to whet the reader's appetite in Table \ref{table:computations}, as produced by \emph{Mathematica}.

As a final remark, computations show that for every $p \leq n \leq 100$ there exists a $j$ so that for each $t \leq \lfloor \log_p(n)\rfloor$ the number of factors of $\Phi_{p^t}(q)$ in equation (\ref{eq:q.fleck.sum}) is, indeed, equal to the number of factors predicted in Theorem \ref{th:fleck.at.k}.   Hence it seems in general that the additional factors of $p$ from Fleck's original congruence arise from the aforementioned unpredicted, non-cyclotomic factor(s).  It also suggests that for $n\in \Z_{\geq 0}$, Theorem \ref{th:fleck.at.k} is as sharp as possible without putting further restrictions on $j$.  It would be quite interesting to characterize those $0 \leq j \leq p-1$ for which the prescribed cyclotomic multiplicities are not sharp, or to simply find values of $n$ and $j$ for which the prescribed multiplicity of $\Phi_{kc}(q)$ differs from the actual multiplicity of $\Phi_{kc}(q)$ by a wide margin.

\section{acknowledgements}
The authors wish to thank Bruce Berndt, J\'an Mi\v{n}\'{a}c, Bruce Reznick and Mike Zieve for their helpful comments in the preparation of this paper.

\begin{table}[b]
\caption{Some computations of non-cyclotomic factors in Fleck sums}\label{table:computations}
\begin{tabular}{p{2in}|p{4in}}
\begin{center}Fleck sum\end{center} & \begin{center}Non-cyclotomic factor(s)\end{center} \\
\hline
\\
$\displaystyle \sum_{m\equiv 1 \imod{3}}(-1)^{\frac{m-1}{3}} \binom{8}{m}_q$& $\displaystyle (q^3+q+1) (q^7+q^4+q^3+q-1)$\\[20pt]
$\displaystyle \sum_{m \equiv 1\imod{5}}(-1)^{\frac{m-1}{5}}\binom{21}{m}_q$&$\displaystyle q^{94} + q^{92} + 2q^{90} + 2q^{89} + 3q^{88} + 3q^{87} + 5q^{86} + 6q^{85} + 10q^{84} + 9q^{83} + 14q^{82} + 14q^{81} + 22q^{80} + 25q^{79} + 31q^{78} + 34q^{77} + 43q^{76} + 49q^{75} + 62q^{74} + 65q^{73} + 80q^{72} + 86q^{71} + 105q^{70} + 114q^{69} + 129q^{68} + 140q^{67} + 158q^{66} + 172q^{65} + 193q^{64} + 201q^{63} + 223q^{62} + 233q^{61} + 256q^{60} + 266q^{59} + 282q^{58} + 292q^{57} + 308q^{56} + 317q^{55} + 332q^{54} + 331q^{53} + 344q^{52} + 341q^{51} + 352q^{50} + 348q^{49} + 347q^{48} + 341q^{47} + 338q^{46} + 330q^{45} + 324q^{44} + 307q^{43} + 300q^{42} + 282q^{41} + 273q^{40} + 256q^{39} + 238q^{38} + 222q^{37} + 203q^{36} + 189q^{35} + 173q^{34} + 152q^{33} + 138q^{32} + 121q^{31} + 110q^{30} + 95q^{29} + 79q^{28} + 69q^{27} + 57q^{26} + 50q^{25} + 41q^{24} + 30q^{23} + 25q^{22} + 17q^{21} + 15q^{20} + 11q^{19} + 4q^{18} + 4q^{17} + 3q^{15} - q^{14} - 2q^{13} - 3q^{12} - 2q^{11} - 2q^{10} - q^{9} - 4q^{8} - q^{7} - 3q^{6} - q^{4} - 2q^{3} - q^{2} - q + 1$\\[20pt]
$\displaystyle \sum_{m\equiv 3\imod{7}}(-1)^{\frac{m-3}{7}}\binom{23}{m}_q$ & $\displaystyle q^{64} - q^{63} + q^{60} + q^{57} + q^{54} + 2q^{52} + 2q^{50} + 2q^{48} + q^{47} + 2q^{46} + q^{45} + 3q^{44} + 2q^{43} + 2q^{42} + q^{41} + 4q^{40} + 2q^{39} + 3q^{38} + 2q^{37} + 4q^{36} + 2q^{35} + 4q^{34} + 2q^{33} + 3q^{32} + 2q^{31} + 4q^{30} + 3q^{29} + 3q^{28} + 2q^{27} + 2q^{26} + 2q^{25} + 3q^{24} + 2q^{23} + 3q^{22} + 2q^{20} + q^{19} + 2q^{18} + q^{17} + q^{16} + q^{15} + q^{13} + q^{12} + q^{8} + q - 1$
\end{tabular}
\end{table}

\end{document}